\newtheorem{theorem}{Theorem}[section]
\newtheorem{lemma}[theorem]{Lemma}
\newtheorem{proof of lemma}[theorem]{Proof of Lemma}
\newtheorem{proposition}[theorem]{Proposition}
\theoremstyle{definition}
\newtheorem{definition}[theorem]{Definition}
\newtheorem{example}[theorem]{Example}
\numberwithin{equation}{section}
\begin{document}

\title[Quaternion Weyl Transform]{Quaternion Weyl Transform and some uniqueness results}

\author{Rupak Kumar Dalai, Somnath Ghosh and R.K. Srivastava}

\address{(Rupak Kumar Dalai, R.K. Srivastava) Department of Mathematics, Indian Institute of Technology, Guwahati,
India 781039.}

\email{rupak.dalai@iitg.ac.in, rksri@iitg.ac.in}

\address{(Somnath Ghosh) Centre For Applicable Mathematics, Tata Institute of Fundamental Research, Bangalore 560065, India.}

\email{somnath.g.math@gmail.com}

\subjclass[2020]{Primary 47G30; Secondary 42B10, 47B10.}

\date{\today}

\keywords{Fourier-Wigner transform, Quaternion Fourier transform, Uncertainty principle, Weyl transform.}

\begin{abstract}
In this article, we study the boundedness and several properties of the quaternion Wigner
transform. Using the quaternion Wigner transform as a tool, we define the quaternion Weyl transform (QWT)
and prove that the QWT is compact for a certain class of symbols in $L^{r}\left(\mathbb{R}^{4},
\mathbb{Q}\right)$ with $1 \leq r \leq 2.$ Moreover, it can not be extended as a bounded operator
for symbols in $L^{r}\left(\mathbb{R}^{4},\mathbb{Q}\right)$ for $2<r<\infty.$ In addition, we prove
a rank analogue of the Benedicks-Amrein-Berthier theorem for the QWT. Further, we remark about the set of
injectivity and Helgason's support theorem for the quaternion twisted spherical means.
\end{abstract}

\maketitle

\section{Introduction}
During the study of quantization problems in quantum mechanics, a type of pseudo-differential
operators was first anticipated by Hermann Weyl in \cite{We}, as an operator on $L^2(\mathbb{R}^n).$
Eventually, these operators became very useful in various areas of mathematics and physics,
especially in harmonic analysis, PDE and time-frequency analysis. Further in \cite{W}, Wong
called these operators as Weyl transform and studied its compactness as an operator on
$L^2(\mathbb{R}^n)$ for the symbol in $L^p(\mathbb{R}^{2n})$ with $1\leq p\leq 2.$
Moreover in \cite{Si}, Simon showed that for the symbol in $L^p(\mathbb{R}^{2n})$ with $2<p<\infty,$
the Weyl transform is not even bounded. In addition, for particular non-commutative groups,
e.g., Heisenberg group, quaternion Heisenberg group, upper half plane, Euclidean and Heisenberg motion
groups, bounded and unboundedness of the Weyl transform with operator valued symbol is studied by
many authors, see \cite{CZ,GS3,PZ,PZ2}.

\smallskip

In recent times, the quaternion Fourier transform (QFT) and the corresponding uncertainty principle have
received significant attention from many researchers \cite{BHHR,CKL,EF,L}. The non-commutativity of the
quaternion multiplication and the Fourier kernel make QFT different from the classical Fourier transform.
The QFT has an important application in data analysis, particularly in color image processing.
Since quaternion decomposes into two complex planes, the QFT also splits into two
Euclidean Fourier transforms, which makes QFT accessible. For instance, the version of
Hardy's theorem studied in \cite{HL} can be generalised for the QTF, see in Section \ref{sec2}.
But this approach can not enlarge for transforms concerning two functions $f$ and $g$ such as
Fourier-Wigner transform and  Weyl transform.  In this article, we define Fourier-Wigner transform
in terms of the QFT and consider its boundedness. Consequently,
we define Weyl transforms for quaternion valued symbol and study its boundedness for the symbol belongs
to $L^p(\mathbb{R}^4,\mathbb{Q}),$ the $L^p$ space of quaternion valued functions.

\smallskip

In addition, we prove some uniqueness results in this setup. In \cite{B}, Benedicks
proved that if $f\in L^1(\mathbb{R}^n),$ then both the sets $\{x\in \mathbb{R}^n:f(x)\neq0\}$ and
$\{\xi\in \mathbb{R}^n:\hat{f}(\xi)\neq0\}$ cannot have finite Lebesgue measure, unless $f=0.$ Concurrently,
in \cite{AB}, Amrein-Berthier reached to the same conclusion via the Hilbert space theory. It is extended for
certain unimodular groups in the form of the qualitative uncertainty principle (QUP). A group $G$ is said to
satisfy QUP if for each $f \in L^2(G)$ with $m\{x\in G: f(x)\neq 0\}<m(G)$ and
\begin{equation}\label{exp524}
\int_{\hat{G}}\text{rank}\hat f(\lambda)\,d\hat{m}(\lambda)<\infty
\end{equation}
implies $f=0$ (see \cite{AL}). For the Heisenberg group $\mathbb H^n,$ the condition (\ref{exp524}) of QUP
implies $\hat f$ should be supported on a set of finite Plancherel measure together with $\text{rank}\hat f(\lambda)$
is finite for almost all $\lambda.$

\smallskip

In \cite{NR}, Narayanan and Ratnakumar proved that if $f\in L^1(\mathbb{H}^n)$ is
supported on $B\times \mathbb{R},$ where $B$ is a compact subset of $\mathbb{C}^n$,
and $\hat{f}(\lambda)$ has finite rank for each $\lambda,$ then $f=0.$ Then the compact
set $B$ is replaced by finite measure set in \cite{GS,V}. An analogue result is also true
for the step two nilpotent Lie groups with MW condition \cite{CGS,GS}. Thereafter, a non trivial
extension of the result for the Heisenberg motion group is established in \cite{GS2}. In
Section \ref{sec4}, we consider a quaternion analogue of the Heisenberg group Weyl transform,
and we prove a version of the Benedicks-Amrein-Berthier result. That is, if $g\in L^1(\mathbb R^4,\mathbb Q)$
is non zero and supported on a product of two dimensional finite measure sets (defined in (\ref{exp714})),
then the Weyl transform $W(g)$ can not have finite rank. Section \ref{sec4} is concluded by a remark about
the injectivity and Helgason's support theorem for the quaternion twisted spherical means.

\section{Preliminaries and auxiliary results}\label{sec2}
\noindent\textit{The quaternion algebra $\mathbb{Q}.$}
The quaternion algebra was first originated by W. R. Hamilton in $1843.$ It is an extension of complex numbers to a four dimensional algebra, which is denoted by $\mathbb{Q}.$ Every element of $\mathbb{Q}$ is a linear combination of a real scalar and three orthogonal imaginary units $i,\, j$ and $k$ with real coefficients i.e.,
\[
\mathbb{Q}=\left\{q=q_{0}+i q_{1}+j q_{2}+k q_{3} \mid q_{0}, q_{1}, q_{2}, q_{3} \in \mathbb{R}\right\},
\]
where the imaginary units $i,\, j$ and $k$ follow Hamilton's multiplication rules
\[
i^{2}=j^{2}=k^{2}=ijk=-1, \quad i j=-j i=k, \quad j k=-k j=i, \quad k i=-i k=j.
\]
The conjugate of $q$ is defined by $\bar{q}=q_{0}-iq_{1}-jq_{2}-kq_{3},$ which satisfies
\[\bar{\bar{q}}=q, \quad \overline{p+q}=\bar{p}+\bar{q}, \quad \overline{p q}=\bar{q} \bar{p}, \quad \text{for all}\quad p, q \in \mathbb{Q}.\]
The module $|q|$ is given by
\[|q|=\sqrt{q \bar{q}}=\sqrt{\left(q_{0}^{2}+q_{1}^{2}+q_{2}^{2}+q_{3}^{2}\right)}.\]
Since $\mathbb{Q}$ is non-commutative, various results on complex field cannot directly extend to quaternions. To be in control of it, quaternion can be split into two planes spanned by $\{i-j, 1+i j\}$ and $\{i+j, 1-i j\}.$ Precisely for $q\in\mathbb{Q},$
\begin{equation}\label{exp521}
q=q_{+}+q_{-}, \quad\mbox{where } q_{\pm}=\frac{1}{2}(q \pm i q j).
\end{equation}
Explicitly in terms of the real components $q_{0},\, q_{1},\, q_{2},\, q_{3} \in \mathbb{R},$ we have
\begin{equation}\label{exp507}
q_{\pm}=\left\{q_{0} \pm q_{3}+i\left(q_{1} \mp q_{2}\right)\right\} \frac{1 \pm k}{2}=\frac{1 \pm k}{2}\left\{q_{0} \pm q_{3}+j\left(q_{1} \pm q_{2}\right)\right\}.
\end{equation}
Then it satisfy the modulus identity $|q|^2=|q_+|^2+|q_-|^2.$
The following commutator relations give a justification for the above decomposition
\begin{equation}\label{exp520}
(1+k)e^{\pm aj}=e^{\mp ai}(1+k)\mbox{ and } (1-k)e^{\pm aj}=e^{\pm ai}(1-k),\mbox{ where } a\in\mathbb{R}.
\end{equation}
A quaternion valued function
$f: \mathbb{R}^{2} \rightarrow \mathbb{Q}$ can be expressed as
\[
f\left(x_{1}, x_{2}\right)=f_{0}\left(x_{1}, x_{2}\right)+i f_{1}\left(x_{1}, x_{2}\right)+j f_{2}\left(x_{1}, x_{2}\right)+k f_{3}\left(x_{1}, x_{2}\right),
\]
where each $f_{i}\left(x_{1}, x_{2}\right)$ is a real valued function. The Schwartz space $\mathcal{S}\left(\mathbb{R}^{2}, \mathbb{Q}\right)$ be the set of smooth functions from $\mathbb{R}^{2}$ to $\mathbb{Q}$ satisfying
\[\sup _{|\alpha| \leq N} \sup _{x \in \mathbb R^{2}}\left(1+|x|^{2}\right)^{N}\left|\left(D^{\alpha} f\right)(x)\right|<\infty,\]
where $N\in\mathbb{Z}_+.$
For $1 \leq r<\infty,$ $L^{r}\left(\mathbb{R}^{2}, \mathbb{Q}\right)$ is the space of all quaternion valued functions such that
\begin{equation}
\|f\|_{r}=\left(\int_{\mathbb{R}^{2}}\left|f\left(x_{1}, x_{2}\right)\right|^{r} d x_{1} d x_{2}\right)^{1 / r}<\infty
\end{equation}
and for $r=\infty,$ $ L^{\infty}\left(\mathbb{R}^{2}, \mathbb{Q}\right)$ is the space of all essentially bounded measurable functions i.e.,
$\|f\|_{\infty}=\operatorname{ess} \sup _{x \in \mathbb{R}^{2}}|f(x)|<\infty.$

\smallskip

\noindent\textit{Quaternion Fourier transform.} For $f \in L^{1}\left(\mathbb{R}^{2}, \mathbb{Q}\right),$
the quaternion Fourier transform (QFT) $\mathcal{F}(f): \mathbb{R}^{2} \rightarrow \mathbb{Q}$ is defined as
\begin{equation}
\mathcal{F}(f)\left(y_{1}, y_{2}\right)=\int_{\mathbb{R}^{2}} e^{-2 \pi i x_{1} y_{1}} f\left(x_{1}, x_{2}\right) e^{-2 \pi j x_{2} y_{2}} d x_{1} d x_{2}.
\end{equation}
This is also known as two sided quaternion Fourier transform. Further, $f$ can be reconstructed from the QFT.
\begin{theorem}{\em \cite{CKL}(Inverse QFT)}
If $f, \mathcal{F}(f) \in L^{1}\left(\mathbb{R}^{2}, \mathbb{Q}\right),$ then
\[f(x_1,x_2)=\int_{\mathbb{R}^{2}} e^{ 2 \pi i y_{1} x_{1}} \mathcal{F}(f)(y_1,y_2) e^{ 2 \pi jy_{2} x_{2}} dy_1dy_2.\]
\end{theorem}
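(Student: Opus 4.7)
The plan is to reduce the inversion to the classical two-dimensional Fourier inversion by means of the split decomposition (\ref{exp521})--(\ref{exp507}). First, write $f=f_++f_-$ and use the factorization in (\ref{exp507}) to express $f_\pm(x_1,x_2)=u_\pm(x_1,x_2)\,\frac{1\pm k}{2}$, where $u_+=f_0+f_3+i(f_1-f_2)$ and $u_-=f_0-f_3+i(f_1+f_2)$ are complex-valued functions on $\mathbb{R}^2$ (taking values in the $\{1,i\}$-plane). Since $|u_\pm|\le\sqrt{2}\,|f|$, the hypothesis $f\in L^1(\mathbb{R}^2,\mathbb{Q})$ gives $u_\pm\in L^1(\mathbb{R}^2,\mathbb{C})$.

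Next I would compute $\mathcal{F}(f_\pm)$ explicitly. In the defining integral $e^{-2\pi i x_1 y_1}$ and $u_\pm$ both lie in the $\{1,i\}$-plane and therefore commute, while the commutator relations (\ref{exp520}) allow me to push $e^{-2\pi j x_2 y_2}$ past the idempotent $\frac{1\pm k}{2}$, converting it into $e^{\pm 2\pi i x_2 y_2}$ with the sign dictated by (\ref{exp520}). A short calculation then yields $\mathcal{F}(f_+)(y_1,y_2)=\widehat{u_+}(y_1,-y_2)\,\frac{1+k}{2}$ and $\mathcal{F}(f_-)(y_1,y_2)=\widehat{u_-}(y_1,y_2)\,\frac{1-k}{2}$, where $\widehat{\,\cdot\,}$ denotes the standard complex Euclidean Fourier transform on $\mathbb{R}^2$. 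The hypothesis $\mathcal{F}(f)\in L^1$ then forces $\widehat{u_\pm}\in L^1(\mathbb{R}^2,\mathbb{C})$, so the classical Fourier inversion theorem is available.

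For the final step, substitute these expressions into the proposed inverse integral $\int e^{2\pi i y_1 x_1}\mathcal{F}(f)(y_1,y_2)\,e^{2\pi j y_2 x_2}\,dy_1\,dy_2$ and once more apply (\ref{exp520}) to move $e^{2\pi j y_2 x_2}$ past $\frac{1\pm k}{2}$. On the ``$+$'' branch, the two reflections in the second variable cancel after the change of variables $y_2\mapsto -y_2$; on the ``$-$'' branch no reflection appears at all. In both cases the inner integral reduces to the classical inverse Fourier transform of $\widehat{u_\pm}$, recovering $u_\pm(x_1,x_2)\,\frac{1\pm k}{2}=f_\pm(x_1,x_2)$, and summing gives $f$. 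The main obstacle is purely combinatorial, namely keeping track of which identity in (\ref{exp520}) applies at each occurrence and matching the signs of the frequency variable, since Fubini and the $L^1$ hypotheses handle every analytic convergence issue and no tool beyond classical Fourier inversion is required.
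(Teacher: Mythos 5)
Your argument is correct. Note, however, that the paper offers no proof of this statement at all: it is quoted from \cite{CKL} as a known result, so there is nothing internal to compare against. What you have written is a legitimate self-contained proof, and it uses precisely the reduction machinery the paper sets up for other purposes, namely the split $f=f_++f_-$ of (\ref{exp521})--(\ref{exp507}), the commutator identities (\ref{exp520}), and the resulting identification (\ref{exp513}) of $\mathcal{F}(f_\pm)$ with Euclidean Fourier transforms (up to the reflection $y_2\mapsto -y_2$ on the $+$ branch, which you track correctly and which cancels in the inversion integral). The only step worth making explicit is why $\mathcal{F}(f)\in L^1$ forces $\widehat{u_\pm}\in L^1$: this requires knowing that $\mathcal{F}(f_\pm)$ is the $\pm$-component of $\mathcal{F}(f)$, so that the pointwise orthogonality $|\mathcal{F}(f)|^2=|\mathcal{F}(f_+)|^2+|\mathcal{F}(f_-)|^2$ (recorded in the paper right after (\ref{exp513})) gives $|\mathcal{F}(f_\pm)|\le|\mathcal{F}(f)|$; this holds because $\mathcal{F}(f_\pm)$ takes values in the plane $\mathbb{C}_i\cdot\frac{1\pm k}{2}$, which is exactly the range of $q\mapsto q_\pm$. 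With that remark, and the usual caveat that classical Fourier inversion yields the identity almost everywhere (or at points of continuity), the proof is complete.
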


\begin{theorem}{\em \cite{CKL}(Plancherel theorem for QFT)}
If $f \in L^{2}\left(\mathbb{R}^{2}, \mathbb{Q}\right),$ then
\[\|f\|_{2}=\|\mathcal{F}(f)\|_{2}.\]
\end{theorem}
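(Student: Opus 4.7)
The plan is to reduce the quaternion Plancherel identity to the classical two-dimensional Plancherel theorem through the $q_\pm$ splitting given in (\ref{exp521})--(\ref{exp507}). First, I would write $f=f_++f_-$, and using the representation of (\ref{exp507}) record each half as
\[
f_\pm(x_1,x_2)=g_\pm(x_1,x_2)\,\tfrac{1\pm k}{2},
\]
where $g_+=f_0+f_3+i(f_1-f_2)$ and $g_-=f_0-f_3+i(f_1+f_2)$ are complex-valued (in the $\mathbb{C}_i$ plane).

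Next, applying the QFT to $f_\pm$ and using the commutator relations (\ref{exp520}), the right exponential $e^{-2\pi j x_2 y_2}$ slides past the idempotent $(1\pm k)/2$ and becomes $e^{\mp 2\pi i x_2 y_2}$. Since $g_\pm$ lies in $\mathbb{C}_i$ and commutes with the $i$-exponentials, this yields
\[
\mathcal{F}(f_\pm)(y_1,y_2)=\widehat{g_\pm}(y_1,\mp y_2)\,\tfrac{1\pm k}{2},
\]
where $\widehat{\,\cdot\,}$ denotes the ordinary two-dimensional (complex) Fourier transform. In particular $\mathcal{F}(f)=\mathcal{F}(f_+)+\mathcal{F}(f_-)$ is itself the $\pm$-decomposition of $\mathcal{F}(f)$, because the right factors $(1\pm k)/2$ project onto the orthogonal quaternionic half-planes spanned by $\{1\pm k,\,i\mp j\}$.

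For the final assembly I would invoke the modulus identity $|q|^2=|q_+|^2+|q_-|^2$, noting that $|(1\pm k)/2|^2=1/2$, so that $|f_\pm|^2=\tfrac12|g_\pm|^2$ pointwise (and the analogous relation for $\mathcal{F}(f_\pm)$ and $\widehat{g_\pm}$). Summing, one obtains
\[
\|f\|_2^2=\tfrac12\bigl(\|g_+\|_2^2+\|g_-\|_2^2\bigr)\quad\text{and}\quad\|\mathcal{F}(f)\|_2^2=\tfrac12\bigl(\|\widehat{g_+}\|_2^2+\|\widehat{g_-}\|_2^2\bigr),
\]
and classical Plancherel applied to $g_\pm\in L^2(\mathbb{R}^2,\mathbb{C})$ (plus change of variables $y_2\mapsto -y_2$ in the $+$ case) concludes the identity. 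The standard density-of-Schwartz argument extends from $f\in L^1\cap L^2$ to all of $L^2$.

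The main obstacle is purely bookkeeping: since $\mathbb{Q}$ is non-commutative, one must place $g_\pm$ to the left of $(1\pm k)/2$, verify that $g_\pm$ genuinely sits in $\mathbb{C}_i$ (so that it commutes with the surviving $i$-exponentials), and check the orthogonality of the two half-planes to ensure $|\mathcal{F}(f)|^2=|\mathcal{F}(f_+)|^2+|\mathcal{F}(f_-)|^2$ pointwise. Once the commutator identities (\ref{exp520}) are used correctly, no further analytic input beyond classical Plancherel is needed.
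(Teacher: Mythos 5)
Your argument is correct, and it is worth noting that the paper itself offers no proof of this statement: it is quoted from \cite{CKL}, where it is established by direct computation with the two-sided kernel. What you give is a self-contained derivation that runs entirely through the splitting machinery the paper sets up immediately \emph{after} the theorem, namely the decomposition $f=f_++f_-$ of (\ref{exp521})--(\ref{exp507}), the commutator relations (\ref{exp520}), the reduction (\ref{exp513}) of $\mathcal F(f_\pm)$ to Euclidean Fourier transforms, and the pointwise Pythagorean identities $|f|^2=|f_+|^2+|f_-|^2$ and $|\mathcal F(f)|^2=|\mathcal F(f_+)|^2+|\mathcal F(f_-)|^2$. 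Each step you flag as a potential obstacle checks out: $g_\pm$ does lie in $\mathbb{C}_i$, right multiplication by $(1\pm k)/2$ does land in the two orthogonal real $2$-planes $\{1+k,\,i-j\}$ and $\{1-k,\,i+j\}$, and $|(1\pm k)/2|^2=1/2$ combined with multiplicativity of the quaternion modulus gives $|f_\pm|^2=\tfrac12|g_\pm|^2$. One small slip: after sliding $e^{-2\pi j x_2y_2}$ past $(1+k)/2$ via (\ref{exp520}) the surviving factor is $e^{+2\pi i x_2y_2}$, so your prose sign ``$e^{\mp 2\pi i x_2y_2}$'' is off for the $+$ component, but your displayed formula $\mathcal F(f_\pm)(y_1,y_2)=\widehat{g_\pm}(y_1,\mp y_2)\tfrac{1\pm k}{2}$ is the correct one (consistent with (\ref{exp513})), and in any case the reflection $y_2\mapsto -y_2$ is an isometry of $L^2$, so the norm identity is unaffected. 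The payoff of your route over the citation is that it makes transparent why Plancherel for the QFT is exactly two copies of classical two-dimensional Plancherel, which is the same philosophy the paper exploits for its Hardy-type theorem via (\ref{exp525}).
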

Due to the fact that $ie^{aj}=e^{-aj}i,\,a\in\mathbb{R},$ the QFT of $f\in L^2\left(\mathbb{R}^{2},\mathbb{Q}\right)$
can be reframed by
\begin{equation}\label{exp516}
\begin{aligned}
\mathcal{F}(f)\left(y_{1}, y_{2}\right) &=\int_{\mathbb{R}^{2}} e^{-2 \pi i x_{1} y_{1}} f\left(x_{1}, x_{2}\right) e^{-2 \pi j x_{2} y_{2}} d x_{1} d x_{2} \\
&=\int_{\mathbb{R}^{2}} e^{- 2 \pi ix_{1} y_{1}}e^{- 2 \pi jx_{2} y_{2}}\left(f_{0}\left(x_{1}, x_{2}\right)+j f_{2}\left(x_{1}, x_{2}\right)\right)  d x_{1} d x_{2} \\
&+\int_{\mathbb{R}^{2}} e^{- 2 \pi ix_{1} y_{1}}e^{ 2 \pi jx_{2} y_{2}}\left(i f_{1}\left(x_{1},x_{2}\right)+\mathrm{k} f_{3}\left(x_{1},x_{2}\right)\right)  d x_{1} d x_{2} \\
 &=\int_{\mathbb R^{2}} e^{-2 \pi i x_{1} y_{1}}e^{-2 \pi j x_{2} y_{2}} \tilde{f}^j\left(x_{1}, x_{2}\right) d x_{1} d x_{2},
\end{aligned}
\end{equation}
where
\begin{equation}
\tilde{f}^j\left(x_{1}, x_{2}\right)=f_{0}\left(x_{1}, x_{2}\right)+\mathrm{i} f_{1}\left(x_{1},-x_{2}\right)+\mathrm{j} f_{2}\left(x_{1}, x_{2}\right)+\mathrm{k} f_{3}\left(x_{1},-x_{2}\right).
\end{equation}
Similarly,
\begin{equation}\label{exp517}
\mathcal{F}(f)\left(y_{1}, y_{2}\right)=\int_{\mathbb R^{2}} \tilde{f}^i\left(x_{1}, x_{2}\right)
e^{-2 \pi i x_{1} y_{1}}e^{-2 \pi j x_{2} y_{2}}  d x_{1} d x_{2},
\end{equation}
where
\begin{equation}\label{exp506}
\tilde{f}^i\left(x_{1}, x_{2}\right)=f_{0}\left(x_{1}, x_{2}\right)+\mathrm{i} f_{1}\left(x_{1},x_{2}\right)+\mathrm{j} f_{2}\left(-x_{1}, x_{2}\right)+\mathrm{k} f_{3}\left(-x_{1},x_{2}\right).
\end{equation}
Here the right-hand side of (\ref{exp516}) is the left-sided quaternion Fourier transform of $\tilde{f}^j,$
and the right-hand side of (\ref{exp517}) is the right-sided quaternion Fourier transform of $\tilde{f}^i.$
Note that $\|f\|_{2}=\|\tilde{f}^i\|_{2}=\|\tilde{f}^j\|_{2}$.
Further, if $f(x_1,-x_2)=f(x_1,x_2),$ then $\tilde{f}^j=f$ and if $f(-x_1,x_2)=f(x_1,x_2),$ then $\tilde{f}^i=f.$

\begin{theorem}{\em (Parseval's theorem for QFT)}
Let $f, g \in L^{1}\left(\mathbb{R}^{2}, \mathbb{Q}\right),$ then
\begin{equation}\label{exp523}
\int_{\mathbb{R}^{2}}\mathcal{F}(f)(y)\overline{\mathcal{F}(g)(y)}dy
=\int_{\mathbb{R}^{2}}\tilde{f}^i(x)\overline{\tilde{g}^i(x)}dx,
\end{equation}
where $\tilde{f}^i$ and $\tilde{g}^i$ are defined as in (\ref{exp506}).
In particular, if $f$ and $g$ are even in first variable, then
$\left\langle\mathcal{F}(f),\mathcal{F}(g)\right\rangle=\left\langle f,g\right\rangle.$
\end{theorem}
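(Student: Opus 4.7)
The plan is to reduce the two-sided QFT to a right-sided transform via identity (\ref{exp517}) and then execute a direct Fubini computation, relying on three structural facts about $\mathbb{Q}$: (i) exponentials of the same pure imaginary unit commute, (ii) quaternion conjugation reverses the order of multiplication, and (iii) a real scalar commutes with every quaternion. First I would write
\[
\mathcal{F}(f)(y) = \int_{\mathbb{R}^2} \tilde{f}^i(x)\, e^{-2\pi i x_1 y_1} e^{-2\pi j x_2 y_2}\, dx,
\]
and analogously for $g$. Taking conjugates and using (ii), one gets $\overline{\mathcal{F}(g)(y)} = \int_{\mathbb{R}^2} e^{2\pi j x_2' y_2} e^{2\pi i x_1' y_1}\, \overline{\tilde{g}^i(x')}\, dx'$. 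Substituting into the left side of (\ref{exp523}) and invoking Fubini produces a triple integral in which, reading across the integrand, the two $j$-exponentials sit adjacent to each other and the two $i$-exponentials sit separated by the $j$-pair.

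Next I would integrate out $y$. The adjacent $j$-exponentials combine by (i) into $e^{2\pi j(x_2'-x_2)y_2}$, and $\int_{\mathbb{R}} e^{2\pi j c y_2}\, dy_2 = \delta(c)$ since the imaginary contribution $j \int \sin(2\pi c y_2)\, dy_2$ vanishes. This $\delta$-factor, being real, commutes through by (iii) and forces $x_2' = x_2$. The two remaining $i$-exponentials then become adjacent, combine as $e^{2\pi i(x_1'-x_1)y_1}$, and the $y_1$-integral produces $\delta(x_1'-x_1)$. The triple integral collapses to $\int_{\mathbb{R}^2} \tilde{f}^i(x)\, \overline{\tilde{g}^i(x)}\, dx$, which is (\ref{exp523}).

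For the \emph{in particular} assertion, if $f(-x_1,x_2) = f(x_1,x_2)$ then each real component $f_n$ is even in $x_1$, and the definition (\ref{exp506}) collapses to $\tilde{f}^i = f$; similarly $\tilde{g}^i = g$. The identity then reads $\langle \mathcal{F}(f), \mathcal{F}(g)\rangle = \langle f,g\rangle$. The only delicate point is the $\delta$-function manipulation inside the Fubini step, which is made rigorous by first restricting to Schwartz data $\mathcal{S}(\mathbb{R}^2,\mathbb{Q})$ (or via a Gaussian truncation) and extending by density; the non-commutativity that would otherwise obstruct the computation is defused precisely because conjugation reverses the order of the exponentials, placing each pair of same-unit factors adjacent after the conjugate is taken.
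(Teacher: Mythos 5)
Your proposal is correct and follows essentially the same route as the paper: rewrite both factors via the right-sided form (\ref{exp517}) with $\tilde{f}^i$ and $\tilde{g}^i$, use that conjugation reverses the order so the same-unit exponentials pair up, and integrate out $y$ to produce $\delta(x'-x)$, collapsing the triple integral. Your extra remarks — that the $y_2$-integral yields a \emph{real} delta which commutes past the remaining $i$-exponentials, and that the formal delta computation should be justified by Schwartz-class truncation and density — are refinements the paper leaves implicit, but the argument is the same.
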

\begin{proof}
The left integral $\int_{\mathbb{R}^{2}}\mathcal{F}(f)(y)
\overline{\mathcal{F}(g)(y)}dy$ of (\ref{exp523}) is equal to
\[\int_{\mathbb{R}^{2}}\int_{\mathbb{R}^{2}}\int_{\mathbb{R}^{2}}
e^{-2 \pi i x_{1} y_{1}} f\left(x_{1}, x_{2}\right) e^{-2 \pi j x_{2} y_{2}}
e^{2 \pi j x'_{2} y_{2}}\overline{ g\left(x'_{1}, x'_{2}\right) }
e^{2 \pi i x'_{1} y_{1}}d xd x'dy.\]
Adapting the method used in (\ref{exp517}), the above integral becomes
\begin{align*}
&\int_{\mathbb{R}^{2}}\int_{\mathbb{R}^{2}}\int_{\mathbb{R}^{2}} \tilde{f}^i(x)
e^{-2 \pi i x_{1} y_{1}}e^{-2 \pi j x_{2} y_{2}} e^{2 \pi j x'_{2} y_{2}}
e^{2 \pi i x'_{1} y_{1}}\overline{\tilde{g}^i(x')}d xd x'dy\\
=&\int_{\mathbb{R}^{2}}\int_{\mathbb{R}^{2}} \tilde{f}^i(x)\delta(x'-x)
\overline{\tilde{g}^i(x')} d x'd x=\int_{\mathbb{R}^{2}}\tilde{f}^i(x)
\overline{\tilde{g}^i(x)}dx.
\end{align*}
\end{proof}

As in (\ref{exp521}), $f \in L^{1}\left(\mathbb{R}^{2}, \mathbb{Q}\right)$
can be decomposed by $f=f_{+}+f_{-}.$ Then the QFT becomes
\begin{equation}\label{exp525}
\mathcal{F}(f)\left(y_{1}, y_{2}\right)=\mathcal{F}\left(f_{+}\right)\left(y_{1}, y_{2}\right)
+\mathcal{F}\left(f_{-}\right)\left(y_{1}, y_{2}\right).
\end{equation}
Using (\ref{exp520}), the QFT of $f_{\pm}$ reduces to the Euclidean Fourier transform
\begin{equation}\label{exp513}
\mathcal{F}\left(f_{\pm}\right)=\int_{\mathbb{R}^{2}} e^{-2 \pi i\left(x_{1} y_{1} \mp x_{2} y_{2}\right)} f_{\pm} d x_{1} d x_{2}.
\end{equation}
Due to the modulus identity $|q|^{2}=\left|q_{+}\right|^{2}+\left|q_{-}\right|^{2},$
we have the following two relations for $f \in L^{1}\left(\mathbb{R}^{2}, \mathbb{Q}\right),$
\begin{equation}
\begin{array}{l}
\left|f\left(x_{1}, x_{2}\right)\right|^{2}=\left|f_{+}\left(x_{1}, x_{2}\right)\right|^{2}
+\left|f_{-}\left(x_{1}, x_{2}\right)\right|^{2}, \\
\left|\mathcal{F}(f)\left(y_{1}, y_{2}\right)\right|^{2}=\left|
\mathcal{F}\left(f_{+}\right)\left(y_{1}, y_{2}\right)\right|^{2}+\left|
\mathcal{F}\left(f_{-}\right)\left(y_{1}, y_{2}\right)\right|^{2}.
\end{array}
\end{equation}
\textit{Hardy's Theorem and rotations.} In \cite{HL}, an extension of Hardy's classical
characterization of real Gaussians to the case of complex Gaussians proved for the
Euclidean Fourier transform, and generalized to complex spaces of several variables
in \cite{TP}. Making use of (\ref{exp525}), we notice that this can also be done for the
QFT. Though the result in \cite{TP} is true for arbitrary $n$-dimensional space, for our
purpose, we state it for $n=2.$

\begin{theorem}{\em \cite{TP}}\label{th502}
Let $f \in L^{1}\left(\mathbb{R}^{2}\right)$ and there is some
$\psi^{0}=\left(\psi_{1}^{0}, \psi_{2}^{0}\right) \in (-\pi / 2, \pi / 2)^{2}$
such that the integral
\[
\widehat{f}\left(e^{i \psi^{0}} s\right)=\int_{\mathbb{R}^{2}} f(x)
\exp \left(-2 \pi i x\left(e^{i \psi^{0}} s\right)\right) d x
\]
converges for all $s=\left(s_{1}, s_{2}\right) \in \mathbb{R}^{2}$ and satisfies
\begin{equation}\label{exp526}
\left|\widehat{f}\left(e^{i \psi^{0}} s\right)\right| \leq C_{1} e^{-\pi|s|^{2} / \alpha},
\end{equation}
where $C_{1}$ and $\alpha$ are positive constants. Then $f$ has an analytic
extension to $\mathbb{C}^{2} .$ Furthermore, suppose
$\theta^{0}=\left(\theta_{1}^{0}, \theta_{2}^{0}\right) \in \mathbb{R}^{2}$ such
that the extension of $f$ satisfies
\begin{equation}\label{exp527}
\left|f\left(e^{i \theta^{0}} r\right)\right| \leq C_{2} e^{-\pi \alpha|r|^{2}}
\end{equation}
for some $C_{2}>0$ and all $r=\left(r_{1}, r_{2}\right) \in \mathbb{R}^{2}$, where $\alpha$
is as above. Then $f$ is a rotation of a multiple of $e^{-\pi \alpha x^{2}}$
through the angle $-\theta_{j}^{0}$ with respect to $x_{j}$ in the $z_{j}$ plane $(j=1,2):$
\[
f(z)=C \exp \left(-\pi \alpha\left(e^{-2 i \theta_{1}^{0}} z_{1}^{2}+e^{-2 i \theta_{2}^{0}}
z_{2}^{2}\right)\right)=C \exp \left(-\pi \alpha\left(e^{-i \theta^{0}} z\right)^{2}\right).
\]
Moreover, we have
$-\theta_{j}^{0} \equiv \psi_{j}^{0} \bmod \pi, \quad\left|\psi_{j}^{0}\right|<\frac{\pi}{4},
\quad j=1,2.$
\end{theorem}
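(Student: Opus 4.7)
The plan is to reduce the two-variable statement to the classical one-variable Hardy theorem with complex rotations (as in \cite{HL}), which says that an entire $g$ on $\mathbb{C}$ with bounds $|g(e^{i\theta_{0}}r)|\leq Ce^{-\pi\alpha r^{2}}$ and $|\widehat{g}(e^{i\psi_{0}}s)|\leq Ce^{-\pi s^{2}/\alpha}$ must equal $c\exp(-\pi\alpha e^{-2i\theta_{0}}z^{2})$, with $-\theta_{0}\equiv\psi_{0}\bmod\pi$ and $|\psi_{0}|<\pi/4$. I would obtain the $n=2$ version by a slicing and freezing argument, using Phragm\'en--Lindel\"of to transfer the joint rotated bounds into one-variable slice bounds.

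First I would promote $f$ to an entire function on $\mathbb{C}^{2}$. The Gaussian decay (\ref{exp526}) of $\widehat{f}$ along the rotated line $e^{i\psi^{0}}\mathbb{R}^{2}$ makes the inversion integral
\[
f(z)=e^{i(\psi_{1}^{0}+\psi_{2}^{0})}\int_{\mathbb{R}^{2}}\widehat{f}(e^{i\psi^{0}}s)\,
\exp\!\bigl(2\pi i\,z\cdot(e^{i\psi^{0}}s)\bigr)\,ds
\]
absolutely convergent for every $z\in\mathbb{C}^{2}$, so that $f$ extends to an entire function of order two on $\mathbb{C}^{2}$ that agrees with (\ref{exp527}) on $e^{i\theta^{0}}\mathbb{R}^{2}$.

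Next, for fixed $z_{2}\in\mathbb{C}$ I consider the slice $g_{z_{2}}(z_{1}):=f(z_{1},z_{2})$. The hypothesis (\ref{exp527}), specialized to $z_{2}=e^{i\theta_{2}^{0}}r_{2}$, yields a one-variable rotated Gaussian bound for $g_{z_{2}}$ on the ray $e^{i\theta_{1}^{0}}\mathbb{R}$. On the Fourier side I would take the partial Fourier transform in the $x_{2}$ slot, extend it to an entire function in that variable via (\ref{exp526}), and apply Phragm\'en--Lindel\"of in $\xi_{2}$ to recover a Gaussian bound for $\widehat{g_{z_{2}}}$ along the ray $e^{i\psi_{1}^{0}}\mathbb{R}$. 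The one-variable theorem then forces
\[
f(z_{1},z_{2})=h(z_{2})\,\exp\!\bigl(-\pi\alpha e^{-2i\theta_{1}^{0}}z_{1}^{2}\bigr),
\]
together with $-\theta_{1}^{0}\equiv\psi_{1}^{0}\bmod\pi$ and $|\psi_{1}^{0}|<\pi/4$. Running the same argument in the second slot on the remaining factor $h$ produces the claimed product representation and the analogous constraint on $\theta_{2}^{0},\psi_{2}^{0}$.

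The main obstacle is the freezing step, because neither the partial Fourier transform nor the one-variable slice of $f$ inherits the rotated Gaussian decay in a pointwise way from the joint hypotheses. One must build an auxiliary entire function of the suppressed variable, control its growth on two different rays using (\ref{exp526})--(\ref{exp527}), and only then apply Phragm\'en--Lindel\"of to extract the sharp one-variable bound. Bookkeeping the composition of rotation angles through this reduction is what delivers the modular congruences $-\theta_{j}^{0}\equiv\psi_{j}^{0}\bmod\pi$ with the sharp range $|\psi_{j}^{0}|<\pi/4$; once this is in place, a Liouville-type argument on $f(z)\exp(\pi\alpha(e^{-i\theta^{0}}z)^{2})$ closes the proof.
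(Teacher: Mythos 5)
This is a theorem the paper imports verbatim from \cite{TP} (Tang--Niu), so there is no in-paper proof to measure your proposal against; the authors only \emph{use} the statement, via the splitting $g=\tilde g_+\frac{1+k}{2}+\tilde g_-\frac{1-k}{2}$, to deduce their quaternionic Hardy theorem. Judged on its own terms, your outline follows the standard route by which the several-variable result is obtained from the one-variable Hogan--Lakey theorem: rotated Fourier inversion to get entirety, then slicing variable by variable. That is the right strategy, and your identification of the freezing step as the crux is accurate.

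However, as written the proposal has two genuine gaps rather than a complete argument. First, the rotated inversion formula $f(z)=e^{i(\psi_1^0+\psi_2^0)}\int\widehat f(e^{i\psi^0}s)e^{2\pi i z\cdot(e^{i\psi^0}s)}\,ds$ is asserted, not proved: it requires showing that $\widehat f$ extends to an entire function of $(\zeta_1,\zeta_2)$ (using that convergence of the defining integral for all $s$ forces $\int|f(x)|e^{c\cdot x}\,dx<\infty$ for all $c$), and then a Phragm\'en--Lindel\"of contour shift from $\mathbb{R}^2$ to $e^{i\psi^0}\mathbb{R}^2$ using the trivial bound $\|\widehat f\|_\infty\le\|f\|_1$ on the real axis together with \eqref{exp526} on the rotated axis. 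Second, and more importantly, the freezing step is only described, not executed: you acknowledge that neither the slice $g_{z_2}$ nor its Fourier transform inherits the rotated Gaussian bounds pointwise, and you defer the fix to an unspecified ``auxiliary entire function'' plus Phragm\'en--Lindel\"of. The workable version is to write $\widehat{g_{z_2}}(\zeta_1)=\int\widehat f(\zeta_1,\zeta_2)e^{2\pi i z_2\zeta_2}\,d\zeta_2$ over the contour $\zeta_2\in e^{i\psi_2^0}\mathbb{R}$, which for fixed $z_2$ on the ray $e^{i\theta_2^0}\mathbb{R}$ yields the required bound $|\widehat{g_{z_2}}(e^{i\psi_1^0}s_1)|\le C(z_2)e^{-\pi s_1^2/\alpha}$ with a $z_2$-dependent constant; the one-variable theorem then applies for each such $z_2$, and the resulting identity $f(z_1,z_2)=f(0,z_2)\exp(-\pi\alpha e^{-2i\theta_1^0}z_1^2)$ must be propagated from the ray to all $z_2$ by analytic continuation before you can iterate in the second slot. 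Without these two pieces made precise, the proposal is an outline of the correct approach rather than a proof; the final ``Liouville-type argument'' is then superfluous, since the iterated slicing already delivers the product form and the congruences $-\theta_j^0\equiv\psi_j^0\bmod\pi$, $|\psi_j^0|<\pi/4$.
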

Consider the function $g \in L^{1}\left(\mathbb{R}^{2}, \mathbb{Q}\right),$ then
using (\ref{exp507}) $g$ can be decomposed as
$g=\tilde{g}_{+}\frac{1+k}{2}+\tilde{g}_{-}\frac{1-k}{2},$
where $\tilde{g}_{\pm}$ are complex valued functions. Since $\mathcal{F}(g)$
splits into $\mathcal{F}(\tilde{g}_{\pm})\frac{1 \pm k}{2}$ in terms of the
Euclidean Fourier transform, if $\mathcal{F}(g)$ satisfies (\ref{exp526}) then $\tilde{g}_{\pm}$
will also satisfy (\ref{exp526}). Hence both $\tilde{g}_{\pm}$ have analytic extension to $\mathbb{C}^{2} .$
Furthermore, if $g$ satisfies (\ref{exp527}) then $\tilde{g}_{\pm}$ follow the above theorem and conclude with
rotations of a multiple of $e^{-\pi \alpha x^{2}}$ through the angle $-\theta^{0}.$
Thus, we have the following version of Hardy's theorem with regard to the QFT.

\smallskip

{\em Suppose $g \in L^{1}\left(\mathbb{R}^{2}, \mathbb{Q}\right)$ such that $\mathcal{F}(g)(e^{i\psi^{0}}s)$
converges for all $s,$ and satisfies $|\mathcal{F}(g)(e^{i \psi^{0}} s)| \leq C_{1} e^{-\pi|s|^{2} / \alpha},$
where $\psi^0, C_1$ and $\alpha$ are as in theorem \ref{th502}. Then $g$ has an analytic extension to $\mathbb{C}^{2}.$ Furthermore, suppose $\theta^{0}\in \mathbb{R}^{2}$ such that the extension of $g$ satisfies
$|g(e^{i \theta^{0}} r)|\leq C_{2} e^{-\pi \alpha|r|^{2}}$ for some $C_{2}>0$ and all $r\in \mathbb{R}^{2}.$
Then $g$ is a rotation of a multiple of $e^{-\pi \alpha x^{2}}$ through the angle $-\theta^{0}$ on $\mathbb{C}^2.$
That is
\[g(z)=C \exp \left(-\pi \alpha\left(e^{-i \theta^{0}} z\right)^{2}\right)
+C' \exp \left(-\pi \alpha\left(e^{-i \theta^{0}} z\right)^{2}\right)k.\]
Moreover, we have $-\theta_{j}^{0} \equiv \psi_{j}^{0} \bmod \pi, \quad\left|\psi_{j}^{0}\right|<\frac{\pi}{4}, \quad j=1,2.$}

\section{The Fourier-Wigner Transform and Weyl transform}
\subsection{Boundedness of Fourier-Wigner Transform.}
Before exploring the Weyl transform, define a related transform known as Fourier-Wigner transform, a helpful tool for studying the Weyl transform.
\begin{definition}
Let $f$ and $g$ be in $\mathcal{S}\left(\mathbb{R}^{2},\mathbb{Q}\right).$ Then the Fourier-Wigner transform of $f$ and $g$ defined by
\begin{equation}\label{exp514}
V(f, g)(q, p)=\int_{\mathbb{R}^{2}} e^{2 \pi i\left(q_{1} x_{1}+\frac{1}{2} q_{1} p_{1}\right)}{f(x+p)} \overline{g(x)} e^{2 \pi j\left(q_2 x_{2}+\frac{1}{2} q_{2} p_{2}\right)} d x,
\end{equation}
where $q=(q_1,\,q_2),$ and $p=(p_1,\,p_2)$ in $\mathbb{R}^2.$
\end{definition}
Using a simple change of variable we can rewrite (\ref{exp514}) as
\begin{equation}\label{exp510}
V(f, g)(q, p)=\int_{\mathbb{R}^{2}} e^{2 \pi i q_{1} y_{1}} f\left(y+\frac{p}{2}\right) \overline{g\left(y-\frac{p}{2}\right)} e^{2 \pi j q_{2} y_2} d y.
\end{equation}
Note that $V: \mathcal{S}\left(\mathbb{R}^{2}, \mathbb{Q}\right) \times \mathcal{S}\left(\mathbb{R}^{2},
\mathbb{Q}\right) \rightarrow \mathcal{S}\left(\mathbb{R}^{4}, \mathbb{Q}\right)$ is a bilinear map.

Now we regard some properties of Wigner transform. For studying the Weyl transform, we requird the notion of the Wigner transform of two functions from $L^{2}\left(\mathbb{R}^{2},\mathbb{Q}\right).$ Consequently, we begin with the QFT of the Fourier-Wigner transform.
\begin{theorem}
Let $f$ and $g$ be in $\mathcal{S}\left(\mathbb{R}^{2},\mathbb{Q}\right)$, then for $x,\,\xi\in\mathbb{R}^2,$
\begin{equation}\label{exp522}
\mathcal{F}\left(V(f, g)\right)(x, \xi)=\int_{\mathbb{R}^{2}} e^{-2\pi i \xi_1 p_1} f\left(x+\frac{p}{2}\right)
\overline{g\left(x-\frac{p}{2}\right)} e^{-2\pi j \xi_2 p_2}d p.
\end{equation}

\end{theorem}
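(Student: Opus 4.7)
The plan is to view the right-hand side of (\ref{exp510}) as an inverse QFT in the internal variable $y$, so that the outer QFT in $q$ simply reverses it, reducing the task to one further transform in $p$. I will interpret $\mathcal{F}$ on $\mathbb{R}^4$ as the iterated two-sided QFT: first in $(q_1,q_2)$ against the frequency $(x_1,x_2)$, then in $(p_1,p_2)$ against $(\xi_1,\xi_2)$, keeping the $i$-exponential on the left and the $j$-exponential on the right of the quaternion-valued integrand at each step, matching the convention of Section~\ref{sec2}.

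First, fix $p \in \mathbb{R}^2$ and set $h_p(y) = f(y+p/2)\,\overline{g(y-p/2)}$. Since $f,g \in \mathcal{S}(\mathbb{R}^2,\mathbb{Q})$, the same is true of $h_p$, uniformly enough in $p$ for Fubini to apply at every step below. Using that the real scalars $y_\ell$ and $q_\ell$ commute inside the exponents (so $e^{2\pi i q_1 y_1} = e^{2\pi i y_1 q_1}$ and likewise for the $j$-exponent), formula (\ref{exp510}) can be rewritten as
\[
V(f,g)(q,p) \;=\; \int_{\mathbb{R}^2} e^{2\pi i y_1 q_1}\, h_p(y)\, e^{2\pi j y_2 q_2}\, dy,
\]
which is exactly the inverse QFT of $h_p$ evaluated at $q$, in the form supplied by the Inverse QFT theorem quoted in Section~\ref{sec2}.

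Second, apply the QFT in the $q$-variable. By $\mathcal{F}\mathcal{F}^{-1}=I$, the result is simply $h_p(x)=f(x+p/2)\,\overline{g(x-p/2)}$. Third, apply the QFT in the $p$-variable at frequency $\xi$, using the same left-$i$/right-$j$ two-sided template; this places $e^{-2\pi i \xi_1 p_1}$ on the left of $h_p(x)$ and $e^{-2\pi j \xi_2 p_2}$ on its right, producing
\[
\int_{\mathbb{R}^2} e^{-2\pi i \xi_1 p_1}\, f(x+p/2)\,\overline{g(x-p/2)}\, e^{-2\pi j \xi_2 p_2}\, dp,
\]
which is precisely the right-hand side of (\ref{exp522}).

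I do not expect a genuine obstacle; the only points worth careful bookkeeping are (i) Fubini on $\mathbb{R}^4_{q,p} \times \mathbb{R}^2_y$, justified by the Schwartz decay of $f,g$; (ii) the commutation $q_\ell y_\ell = y_\ell q_\ell$ used in step one is a commutation of real scalars and so does not transport any imaginary unit across a quaternion-valued factor, leaving the left-$i$/right-$j$ positioning intact; and (iii) checking that the iterated $\mathbb{R}^4$-QFT preserves that same template, so the Inverse QFT theorem applies to $h_p$ in the $q$-slot without any sign, reflection, or tilde twist of the form (\ref{exp506}) appearing.
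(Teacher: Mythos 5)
Your argument is correct in substance but takes a genuinely different route from the paper. You factor the $\mathbb{R}^4$-QFT of $V(f,g)$ into an iterated transform (first in $q$, then in $p$), observe from (\ref{exp510}) that $q\mapsto V(f,g)(q,p)$ is exactly the inverse QFT of $h_p(y)=f(y+\frac p2)\overline{g(y-\frac p2)}$, and then invoke Fourier inversion to collapse the $q$-step, leaving only the $p$-transform that is the right-hand side of (\ref{exp522}). The paper instead inserts a Gaussian damping factor $e^{-\varepsilon^2\pi|q|^2}$ into the $q$-integral as in (\ref{exp505}), which makes the $q$--$y$ interchange absolutely convergent, identifies the result as an approximate-identity convolution $\varphi_\varepsilon\ast F_p\to F_p$, and passes to the limit by dominated convergence; this is essentially an inline proof of the very inversion identity you black-box. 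Your route is shorter, at the cost of two points you should make explicit. First, what you need is $\mathcal{F}\circ\mathcal{F}^{-1}=\mathrm{id}$ on Schwartz functions, whereas the Inverse QFT theorem quoted in Section \ref{sec2} literally gives the composition in the other order, $\mathcal{F}^{-1}\circ\mathcal{F}=\mathrm{id}$; the version you need does hold, but it should be derived, e.g.\ via the splitting $h=h_++h_-$ of (\ref{exp521})--(\ref{exp513}), which reduces both compositions to the Euclidean case with the factor $\frac{1\pm k}{2}$ riding along. Second, your bookkeeping remark (i) overclaims: Fubini on the full product $\mathbb{R}^4_{q,p}\times\mathbb{R}^2_y$ is \emph{not} available, since $|e^{2\pi i(y_1-x_1)q_1}h_p(y)e^{2\pi j(y_2-x_2)q_2}|=|h_p(y)|$ is constant in $q$ and hence not integrable over $q\in\mathbb{R}^2$; fortunately your actual argument never performs that interchange (the only Fubini you need is the splitting of the absolutely convergent $\mathbb{R}^4$-integral of the Schwartz function $V(f,g)$ into an iterated $q$-then-$p$ integral), but the remark as written points at exactly the illegitimate step that the paper's Gaussian regularization is designed to avoid.
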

\begin{proof}
For $\varepsilon>0,$ define the function $W_{\varepsilon}$ on $\mathbb{R}^{4}$ by
\begin{equation}\label{exp505}
W_{\varepsilon}(x, \xi)=\int_{\mathbb{R}^{2}} \int_{\mathbb{R}^{2}} e^{-\varepsilon^{2}\pi|q|^{2}} e^{-2\pi i x_1q_1-2\pi i \xi_1p_1} V(f, g)(q, p)e^{-2\pi j x_2q_2-2\pi j \xi_2p_2} d q d p.
\end{equation}
By using Fubini's theorem and the fact that Euclidean Fourier transform of
\begin{equation}\label{exp502}
\varphi(x)=e^{-\pi |x|^2} ~\mbox{ for } x \in \mathbb{R}^{2}
\end{equation}
is equal to $\varphi,$ we get
\begin{align*}
W_{\varepsilon}(x, \xi)=&\int_{\mathbb{R}^{2}} e^{-2\pi i \xi_1p_1}
\int_{\mathbb{R}^{2}}\left(\int_{\mathbb{R}}e^{-\varepsilon^{2}\pi |q_1|^{2}}e^{-2\pi i(x_1-y_1)q_1}d q_1\right) f\left(y+\frac{p}{2}\right) \overline{g\left(y-\frac{p}{2}\right)}\\
&\qquad\quad\left(\int_{\mathbb{R}} e^{-2\pi j(x_2-y_2)q_2} e^{-\varepsilon^{2}\pi |q_2|^{2}} d q_2\right)dy\,e^{-2\pi j \xi_2p_2} d p \\
=&\int_{\mathbb{R}^{2}} e^{-2\pi i \xi_1p_1}\left(\int_{\mathbb{R}^{2}} \varepsilon^{-2} e^{\frac{\pi|x-y|^{2}}{-\varepsilon^{2}}} f\left(y+\frac{p}{2}\right) \overline{g\left(y-\frac{p}{2}\right)} d y\right)e^{-2\pi j \xi_2p_2} d p.\\
\end{align*}
Now, for each $p\in\mathbb{R}^{2},$ define the function $F_{p}$ by
\begin{equation}\label{exp501}
F_{p}(y)=f\left(y+\frac{p}{2}\right) \overline{g\left(y-\frac{p}{2}\right)}.
\end{equation}
Using (\ref{exp501}), we get
\[W_{\varepsilon}(x, \xi)=\int_{\mathbb{R}^{2}} e^{-2\pi i \xi_1p_1}\left(\varphi_{\varepsilon}\ast F_p\right)(x) e^{-2\pi j \xi_2p_2} d p,\]
where
$\varphi_{\varepsilon}(x)=\varepsilon^{-2} \varphi\left(\frac{x}{\varepsilon}\right).$
For each fixed $p$ in $\mathbb{R}^{2},$ the equation (\ref{exp501}) leads
\[\varphi_{\varepsilon}\ast F_p \rightarrow\left(\int_{\mathbb{R}^{2}} \varphi(x) d x\right) F_{p}=F_{p}\]
uniformly on compact subsets of $\mathbb{R}^{2}$ as $\varepsilon \rightarrow 0$. Let $N$ be any positive integer. Then there exists a positive constant $C_{N}$ such that
\begin{equation}\label{exp504}
\begin{aligned}
\left|\left(\varphi_{\varepsilon}\ast F_p\right)(x)\right| \leq\left\|\varphi_{\varepsilon}\right\|_1\left\|F_{p}\right\|_\infty&
\leq\sup _{y \in \mathbb{R}^{2}}\left|f\left(y+\frac{p}{2}\right) g\left(y-\frac{p}{2}\right)\right|\\
&\leq C_{N}\left(1+|p|^{2}\right)^{-N},
\end{aligned}
\end{equation}
for all $\varepsilon>0 .$ So, by inequality (\ref{exp504}) and the Lebesgue dominated convergence theorem, we get
\[
\lim _{\varepsilon \rightarrow 0} W_{\varepsilon}(x, \xi)=\int_{\mathbb{R}^{2}} e^{-2\pi i \xi_1p_1} f\left(x+\frac{p}{2}\right) \overline{g\left(x-\frac{p}{2}\right)}e^{-2\pi i \xi_2p_2} d p.
\]
But, applying the Lebesgue dominated convergence theorem in (\ref{exp505}), we can conclude that
\[\begin{aligned}	
\lim _{\varepsilon \rightarrow 0} W_{\varepsilon}(x, \xi) &=\int_{\mathbb{R}^{2}} \int_{\mathbb{R}^{2}} e^{-2\pi i x_1q_1-2\pi i \xi_1p_1} V(f, g)(q, p)e^{-2\pi j x_2q_2-2\pi j \xi_2p_2} d q d p \\	
&=\mathcal{F}\left(V(f, g)\right)(f, g)(x, \xi).
\end{aligned}\]
\end{proof}
In view of the above result, the Wigner transform $W(f,g)$ of $f,\,g\in \mathcal{S}\left(\mathbb{R}^{2},\mathbb{Q}\right)$ can be defined by
\begin{equation}\label{exp509}
W(f, g)(x, \xi)=\int_{\mathbb{R}^{2}} e^{-2\pi i \xi_1 p_1} f\left(x+\frac{p}{2}\right) \overline{g\left(x-\frac{p}{2}\right)} e^{-2\pi j \xi_2 p_2}dp.
\end{equation}
Some of its properties are obtained in the following.
\begin{proposition}\label{prop502}
Let $f,\,g\in\mathcal{S}\left(\mathbb{R}^{2},\mathbb{Q}\right)$ and $2\leq r\leq\infty,$ then the Wigner transform $W(f, g)\in L^r\left(\mathbb{R}^{4},\mathbb{Q}\right).$ Moreover,
\[\|W(f, g)\|_r\leq\|f\|_2\|g\|_2.\] Hence
$W:\mathcal{S}\left(\mathbb{R}^{2},\mathbb{Q}\right) \times \mathcal{S}\left(\mathbb{R}^{2},\mathbb{Q}\right) \rightarrow L^r\left(\mathbb{R}^{4},\mathbb{Q}\right)$ can be extended uniquely to a
bilinear operator
$W: L^{2}\left(\mathbb{R}^{2},\mathbb{Q}\right) \times L^{2}\left(\mathbb{R}^{2},\mathbb{Q}\right) \rightarrow L^{r}\left(\mathbb{R}^{4},\mathbb{Q}\right).$
\end{proposition}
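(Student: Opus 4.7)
The plan is to prove two endpoint estimates---an easy $L^{\infty}$ bound and the equality case at $r=2$---and then interpolate to fill in $2<r<\infty$. The unique bilinear extension from $\mathcal{S}\times\mathcal{S}$ to $L^{2}\times L^{2}$ will follow from density together with the bilinearity of $W$.

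For the $L^{\infty}$ endpoint, I would apply the Cauchy--Schwarz inequality in $p$ directly to the defining integral (\ref{exp509}). Since the quaternionic exponentials have modulus one and $|pq|=|p||q|$ on $\mathbb{Q}$, this yields
\[
|W(f,g)(x,\xi)|\le\Big(\int_{\mathbb{R}^{2}}|f(x+p/2)|^{2}\,dp\Big)^{1/2}\Big(\int_{\mathbb{R}^{2}}|g(x-p/2)|^{2}\,dp\Big)^{1/2},
\]
and the substitution $p=2u$ inside each factor reduces the right-hand side to a constant multiple of $\|f\|_{2}\|g\|_{2}$, uniformly in $(x,\xi)$.

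The $L^{2}$ endpoint exploits the identity $W(f,g)=\mathcal{F}(V(f,g))$ from (\ref{exp522}), combined with the Plancherel theorem for the QFT, to reduce matters to computing $\|V(f,g)\|_{2}$. For each fixed $p$, viewing $q\mapsto V(f,g)(q,p)$ as the two-sided (inverse) QFT in $q$ of the auxiliary function $F_{p}(y)=f(y+p/2)\overline{g(y-p/2)}$ introduced in (\ref{exp501}), Plancherel gives
\[
\int_{\mathbb{R}^{2}}|V(f,g)(q,p)|^{2}\,dq=\int_{\mathbb{R}^{2}}|f(y+p/2)|^{2}|g(y-p/2)|^{2}\,dy.
\]
Integrating in $p$ and applying the unimodular change of variables $u=y+p/2$, $v=y-p/2$ decouples the two factors and produces exactly $\|f\|_{2}^{2}\|g\|_{2}^{2}$. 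Hence $\|W(f,g)\|_{2}=\|V(f,g)\|_{2}=\|f\|_{2}\|g\|_{2}$.

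With both endpoint estimates in hand, the log-convexity inequality $\|h\|_{r}\le\|h\|_{2}^{2/r}\|h\|_{\infty}^{1-2/r}$ produces the required $L^{r}$ bound for every $2\le r\le\infty$, absorbing the constant from the $L^{\infty}$ step. The bilinearity of $W$ on $\mathcal{S}\times\mathcal{S}$ and the density of $\mathcal{S}(\mathbb{R}^{2},\mathbb{Q})$ in $L^{2}(\mathbb{R}^{2},\mathbb{Q})$ then yield a unique continuous extension to $L^{2}\times L^{2}\to L^{r}$ in the standard way (approximating each slot separately and using the uniform bound to control the cross-terms). I expect the only genuinely delicate step to be the application of Plancherel to the $q$-slice of $V(f,g)$: because $\mathbb{Q}$ is non-commutative, the two exponentials flanking $F_{p}$ cannot be combined into a single scalar kernel, and one must invoke the reformulations (\ref{exp516})--(\ref{exp517}) of the QFT to justify that $\|V(f,g)(\,\cdot\,,p)\|_{2}=\|F_{p}\|_{2}$ for this quaternion-valued integrand.
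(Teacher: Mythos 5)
Your proposal follows the paper's proof almost exactly: Cauchy--Schwarz in $p$ gives the $L^{\infty}$ endpoint, the QFT Plancherel theorem applied to the $p$-variable (your detour through $V(f,g)$ and two Plancherels composes to the same computation) gives the exact $L^{2}$ identity $\|W(f,g)\|_{2}=\|f\|_{2}\|g\|_{2}$, and interpolation --- you use log-convexity of the $L^{r}$ norms where the paper invokes Riesz--Thorin --- fills in $2<r<\infty$, with the extension to $L^{2}\times L^{2}$ by density exactly as asserted there. Your observation that Cauchy--Schwarz only yields a \emph{constant multiple} of $\|f\|_{2}\|g\|_{2}$ at $r=\infty$ is in fact more careful than the paper, whose own proof silently drops the Jacobian factor $4$ arising from the substitution $u=x+\tfrac{p}{2}$ in $\mathbb{R}^{2}$ and therefore states the bound with constant $1$.
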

\begin{proof}
Let $f,\,g\in\mathcal{S}\left(\mathbb{R}^{2},\mathbb{Q}\right).$ By Cauchy-Schwartz inequality, we get
\begin{align*}
|W(f, g)(x, \xi)|&=\left|\int_{\mathbb{R}^{2}} e^{-2\pi i \xi_1 p_1} f\left(x+\frac{p}{2}\right) \overline{g\left(x-\frac{p}{2}\right)} e^{-2\pi j \xi_2 p_2}d p\right|\\
&\leq\int_{\mathbb{R}^{2}}\left| f\left(x+\frac{p}{2}\right)\right|\left| \overline{g\left(x-\frac{p}{2}\right)}\right|d p\leq\|f\|_2\|g\|_2.
\end{align*}
Hence \[\|W(f, g)\|_\infty\leq\|f\|_2\|g\|_2.\]
Now for $r=2,$ Plancherel theorem gives
{\small \[\begin{aligned}
\int_{\mathbb{R}^2}\int_{\mathbb{R}^2}\left|W(f, g)(x, \xi)\right|^{2} dxd\xi &=\int_{\mathbb{R}^{2}}\left(\int_{\mathbb{R}^{2}}\left| f\left(x\right) \overline{g\left(x-p\right)}\right|^{2} dp\right) dx
&=\|f\|_2\|g\|_2.
\end{aligned}\]}
Thus the result follows from the Riesz-Thorin interpolation Theorem.
\end{proof}
Though we show that the Wigner transform is $L^{2}$ bounded for all $f$ and $g$ in $L^{2}\left(\mathbb{R}^{2},\mathbb{Q}\right),$ we could only prove the orthogonality relation for a sub-class of functions.
\begin{theorem}{\em (The Moyal Identity)}
Let $f_{1},\, g_{1},\, f_{2},\,g_{2}\in\mathcal{S}\left(\mathbb{R}^{2},\mathbb{Q}\right)$ such that $f_i(x_1,x_2)=f_i(-x_1,x_2)$ and $g_i(x_1,x_2)=g_i(-x_1,x_2)$ for $i=1,2.$ Then
\[\left\langle W\left(f_{1}, g_{1}\right), W\left(f_{2}, g_{2}\right)\right\rangle=\left\langle f_{1},\, f_{2} \left\langle \bar{g_2},\, \bar{g_1}\right\rangle\right\rangle.\]
\end{theorem}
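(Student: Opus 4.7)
The plan is to apply Parseval's theorem for the QFT fiber-wise in the variable $\xi$, absorb the resulting tilde-$i$ modification using the evenness hypothesis, and then reduce to the desired inner product by a change of variables. First, I would observe that for each fixed $x$, the map $\xi \mapsto W(f_k, g_k)(x, \xi)$ is the QFT (in the variable $p$) of the auxiliary function $H^{(k)}_x(p) := f_k(x+p/2)\,\overline{g_k(x-p/2)}$ for $k = 1, 2$. Applying Parseval's theorem for the QFT to the inner $\xi$-integral then gives
\[
\int_{\mathbb{R}^2} W(f_1, g_1)(x, \xi)\,\overline{W(f_2, g_2)(x, \xi)}\, d\xi = \int_{\mathbb{R}^2} \widetilde{H^{(1)}_x}^{\,i}(p)\,\overline{\widetilde{H^{(2)}_x}^{\,i}(p)}\, dp.
\]

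Second, I would absorb the tilde-$i$ operation using the hypothesis on the symmetry of $f_k, g_k$. Decomposing $\mathbb{Q} = \mathbb{C}_i \oplus \mathbb{C}_i\, j$ and writing $H^{(k)}_x = H^{(k),1}_x + H^{(k),2}_x\, j$ with $\mathbb{C}_i$-valued components, the tilde-$i$ operation reflects $p_1 \mapsto -p_1$ only on the $\mathbb{C}_i\, j$-component. Since $f_k, g_k$ are each even in the first variable (so are all their real components, and hence all their $\mathbb{C}_i$-valued pieces), the substitution $x_1 \mapsto -x_1$ in the outer $x$-integration transforms $H^{(k),m}_x(p_1, p_2) \to H^{(k),m}_x(-p_1, p_2)$. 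Expanding the product $\widetilde{H^{(1)}_x}^{\,i}\,\overline{\widetilde{H^{(2)}_x}^{\,i}}$ in the $\mathbb{C}_i \oplus \mathbb{C}_i\, j$ decomposition and combining with compensating $p_1 \mapsto -p_1$ substitutions in the $p$-integration, the reflections cancel and one obtains
\[
\int_{\mathbb{R}^2}\!\int_{\mathbb{R}^2} \widetilde{H^{(1)}_x}^{\,i}(p)\,\overline{\widetilde{H^{(2)}_x}^{\,i}(p)}\, dp\, dx = \int_{\mathbb{R}^2}\!\int_{\mathbb{R}^2} H^{(1)}_x(p)\,\overline{H^{(2)}_x(p)}\, dp\, dx.
\]

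Third, I would apply the change of variables $u = x + p/2$, $v = x - p/2$ (unit Jacobian on $\mathbb{R}^2 \times \mathbb{R}^2$) to obtain
\[
\int_{\mathbb{R}^2}\!\int_{\mathbb{R}^2} f_1(u)\,\overline{g_1(v)}\,g_2(v)\,\overline{f_2(u)}\, du\, dv,
\]
and then identify this with the claimed right-hand side. Setting $M := \langle \bar{g_2}, \bar{g_1}\rangle = \int_{\mathbb{R}^2} \bar{g_2}(v)\,g_1(v)\, dv$, one finds $\bar M = \int_{\mathbb{R}^2} \overline{g_1(v)}\,g_2(v)\, dv$ via $\overline{AB} = \bar B\,\bar A$ and $\overline{\bar q} = q$. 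The definition of the quaternion inner product then gives $\langle f_1, f_2\,M\rangle = \int_{\mathbb{R}^2} f_1(u)\,\overline{f_2(u)\,M}\, du = \int_{\mathbb{R}^2} f_1(u)\,\bar M\,\overline{f_2(u)}\, du$, which matches the integral above.

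The principal obstacle is the second step. Non-commutativity of the quaternions prevents a direct identification of $\widetilde{H^{(k)}_x}^{\,i}$ with $H^{(k)}_x$; one must separately track the $\mathbb{C}_i$- and $\mathbb{C}_i\,j$-components of the integrand and apply compensating substitutions in $x_1$ and $p_1$ to each, which is precisely why the theorem restricts to the sub-class of functions that are even in the first variable.
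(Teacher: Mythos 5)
Your overall architecture is a genuine reorganization of the paper's argument: the paper expands the quadruple integral $\int\int W(f_1,g_1)\overline{W(f_2,g_2)}\,dx\,d\xi$ directly, decomposes the functions along the idempotents $\frac{1\pm k}{2}$, and uses the commutator rule to collapse the $\xi$-integration, whereas you invoke the Parseval theorem for the QFT fiber-wise in $x$. Your Steps 1 and 3 are correct; in particular the identification of $\int\!\int f_1(u)\overline{g_1(v)}g_2(v)\overline{f_2(u)}\,du\,dv$ with $\langle f_1, f_2\langle \bar{g_2},\bar{g_1}\rangle\rangle$ is exactly the paper's closing step. You have also correctly located the crux in Step 2.

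However, Step 2 as written does not go through. Writing $H^{(k)}_x=H^{(k),1}_x+H^{(k),2}_x j$ with $\mathbb{C}_i$-valued components, the product $\widetilde{H^{(1)}_x}^{\,i}(p)\,\overline{\widetilde{H^{(2)}_x}^{\,i}(p)}$ has four terms. The two diagonal terms, $H^{(1),1}_x(p)\overline{H^{(2),1}_x(p)}$ and $H^{(1),2}_x(-p_1,p_2)\overline{H^{(2),2}_x(-p_1,p_2)}$, carry the reflection on both factors or on neither, and are indeed fixed by $p_1\mapsto -p_1$. But the two cross terms, e.g.\ $-H^{(1),1}_x(p)\,H^{(2),2}_x(-p_1,p_2)\,j$, carry the reflection on exactly one factor. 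Your identity $H^{(k),m}_{(-x_1,x_2)}(p)=H^{(k),m}_{x}(-p_1,p_2)$ is correct, but every substitution $x_1\mapsto -x_1$ or $p_1\mapsto -p_1$ reflects the arguments of \emph{both} factors simultaneously, so no combination of them turns $\int\!\int H^{(1),1}_x(p)H^{(2),2}_x(-p_1,p_2)\,dp\,dx$ into $\int\!\int H^{(1),1}_x(p)H^{(2),2}_x(p)\,dp\,dx$; one merely cycles between the term and its mirror image. In the variables $u=x+\frac{p}{2}$, $v=x-\frac{p}{2}$ the reflected cross term is $\int\!\int H^{(1),1}(u,v)\,H^{(2),2}\bigl((v_1,u_2),(u_1,v_2)\bigr)\,du\,dv$ --- the first coordinates of $u$ and $v$ are swapped inside the second factor --- and this does not reduce to the unreflected term. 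You can isolate the obstruction by taking $f_1,g_1,f_2$ real-valued and $g_2=\Gamma j$ with $\Gamma$ real and even in the first variable: only one cross term survives, and for separable even data the two sides of your claimed Step 2 identity evaluate to visibly different products of one-dimensional integrals. So ``the reflections cancel'' is asserted, not proved; this is precisely the point that the paper's own proof also passes over when it invokes ``the commutator rule together with the assumptions of functions'' to set $p'=p$ in all four factors, so the gap is real and not an artifact of your reformulation, but your proposal does not close it.
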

\begin{proof}
Consider
\begin{equation}\label{exp519}
\left\langle W\left(f_{1}, g_{1}\right), W\left(f_{2}, g_{2}\right)\right\rangle
=\int_{\mathbb{R}^2}\int_{\mathbb{R}^2}W\left(f_{1}, g_{1}\right)(x, \xi) \overline{W\left(f_{2}, g_{2}\right)(x, \xi)}dx d\xi
\end{equation}
\begin{align*}
=\int_{\mathbb{R}^{2}}\int_{\mathbb{R}^{2}}\int_{\mathbb{R}^{2}}\int_{\mathbb{R}^{2}}e^{-2\pi i \xi_1 p_1} &f_1\left(x+\frac{p}{2}\right) \overline{g_1\left(x-\frac{p}{2}\right)} e^{-2\pi j \xi_2 p_2}\\
&\times e^{2\pi j \xi_2p'_2}g_2\left(x-\frac{p'}{2}\right)\overline{f_2\left(x+\frac{p'}{2}\right)}
e^{2\pi i \xi_1 p'_1}dpdp'dxd\xi.
\end{align*}
We can write $f_1=\frac{1-k}{2}f_{11}+\frac{1+k}{2}f_{12},\,\bar{f_2}=f_{21}\frac{1-k}{2}+f_{22}\frac{1+k}{2}$ from (\ref{exp507}) and $\bar{g}_1=ig_{11}-g_{12},\,g_2=ig_{21}-g_{22},$ where $f_{lm}$ and $g_{lm}$ for $1\leq l,m\leq2$ are combination of two real valued functions with imaginary unit $j$  such as $h_1+jh_2.$
Then we have
\begin{align*}
&f_1\left(x+\frac{p}{2}\right) \overline{g_1\left(x-\frac{p}{2}\right)}=\sum_{1\leq l,m\leq 2} \frac{1+(-1)^{l}k}{2}f_{1,l}\left(x+\frac{p}{2}\right)i^{m}g_{1,m}\left(x-\frac{p}{2}\right),\\
&g_2\left(x-\frac{p'}{2}\right)\overline{f_2\left(x+\frac{p'}{2}\right)}=\sum_{1\leq l',m'\leq 2}
i^{l'}g_{2,l'}\left(x-\frac{p'}{2}\right)f_{2,m'}\left(x+\frac{p'}{2}\right)\frac{1+(-1)^{m'}k}{2}.
\end{align*}
Replacing these values on above integral and then using the commutator rule (\ref{exp520}) together with the assumptions of functions, (\ref{exp519}) can be phrased as
\begin{align*}
\int_{\mathbb{R}^{2}}\int_{\mathbb{R}^{2}}
\sum_{1\leq l,m,l',m'\leq 2} &\frac{1+(-1)^{l}k}{2}f_{1,l}\left(x+\frac{p}{2}\right)i^{m}
g_{1,m}\left(x-\frac{p}{2}\right)\\
&\quad\quad\times i^{l'}g_{2,l'}\left(x-\frac{p}{2}\right)f_{2,m'}
\left(x+\frac{p}{2}\right)\frac{1+(-1)^{m'}k}{2}dxdp
\end{align*}
\[=\int_{\mathbb{R}^{2}}\int_{\mathbb{R}^{2}}f_1\left(x+\frac{p}{2}\right) \overline{g_1\left(x-\frac{p}{2}\right)}g_2\left(x-\frac{p}{2}\right)\overline{f_2\left(x+\frac{p}{2}\right)}dxdp. \qquad\qquad\qquad \]
Let $u=x+\frac{p}{2}$ and $v=x-\frac{p}{2}$, then we can conclude that
\[\begin{aligned}
\left\langle W\left(f_{1}, g_{1}\right), W\left(f_{2}, g_{2}\right)\right\rangle=&\int_{\mathbb{R}^{2}}\int_{\mathbb{R}^{2}} f_1(u) \overline{g_1(v)}g_2(v)\overline{f_2(u)}d u d v
=\left\langle f_{1},\, f_{2} \left\langle \bar{g_2},\, \bar{g_1}\right\rangle\right\rangle.
\end{aligned}\]
\end{proof}

\noindent\textit{Zeros of the Wigner transform.} Now we confer some examples of pairs $(f,g)$
for which the Wigner transform $W(f,g)$ never vanish. The zero set of the Wigner transform is
useful in studying the injectivity of a general Berezin transform and the generalized Berezin
quantization problem. In \cite{GJM}, Authors study under which conditions the Euclidean Wigner
transform never vanish. It is shown that when $f$ and $g$ are generalized Gaussian then the
Euclidean Wigner transform is nonzero. Moreover, from the basic example of the one-sided exponential
function $e^{-at}\boldsymbol{1}_{(0,\infty)}(t)$ some pairs are obtained.

Let $f,g\in L(\mathbb{R}^2)$ be such that Euclidean Wigner transform does not vanish. Then consider
\begin{equation*}
W(f, (a+bk)g)(x, \xi)=\int_{\mathbb{R}^{2}} e^{-2\pi i \xi_1 p_1} f\left(x+\frac{p}{2}\right)
\overline{g\left(x-\frac{p}{2}\right)}(a-bk) e^{-2\pi j \xi_2 p_2}dp,
\end{equation*}
where $a,b\in\mathbb{R}.$ By using (\ref{exp520}), we get
\begin{equation*}
W(f, (a+bk)g)(x, \xi)=\int_{\mathbb{R}^{2}} e^{-2\pi i \xi\cdot p}
f\left(x+\frac{p}{2}\right) \overline{g\left(x-\frac{p}{2}\right)}(a-bk)dp.
\end{equation*}
Hence we gain that the pair $(f, (a+bk)g)$ makes Wigner transform into zero free.
It is also clear that Gaussian $\varphi$ defined in (\ref{exp502}) makes so.
Further, we give examples of such pairs, which are generalized from the Gaussian.
For $a,b,c,d\in\mathbb{R}^2$ define
\[\pi^i_{b_1-d_1}(a)f(x)=e^{2\pi i(b_1-d_1)x_1}f(x-a),~\pi^j_{b_2-d_2}(c)f(x)
=e^{-2\pi j(b_2-d_2)x_2}f(x-c).\]
Then the Wigner transform $W(\pi^i_{b_1-d_1}(a)f,\pi^j_{b_2-d_2}(c)g)(x,\xi)$ is equal to
\[\int_{\mathbb R^{2}}e^{2\pi i(b_1-d_1)(x_1+\frac{p_1}{2})-\xi_1p_1}f(x-a+\frac{p}{2})
\overline{g(x-c-\frac{p}{2})}e^{2\pi j(b_2-d_2)(x_2-\frac{p_2}{2})-\xi_2p_2}dp.\]
Replacing $p$ by $p+a-c$ the above integral can be written as
\begin{align*}	e^{2\pi i\{x_1(b_1-d_1)-\xi_1(a_1-c_1)+\frac{1}{2}(a_1d_1-b_1c_1)\}}W(f,g)
&\left(x-\frac{a+c}{2},\xi-\frac{b+d}{2}\right)\\	
&e^{2\pi j\{x_2(b_2-d_2)-\xi_2(a_2-c_2)+\frac{1}{2}(a_2d_2-b_2c_2)\}}.
\end{align*}
By a suitable change of variable we can conclude that $W(f,g)\neq 0$ if only if
$W(\pi^i_{b_1}(a)f,\pi^j_{b_2}(c)g)\neq 0.$

\subsection{The Weyl transform.}
In this subsection, we introduce the Weyl transform and see its connection with the Wigner transform.

Let $\sigma\in \mathcal{S}\left(\mathbb{R}^{4},\mathbb{Q}\right)$ be a symbol. Then define the Weyl transform $W_\sigma$ on $\mathcal{S}\left(\mathbb{R}^{2},\mathbb{Q}\right)$ corresponding to $\sigma$ by
\begin{equation}\label{exp515}
\left\langle W_{\sigma} f, g\right\rangle=\int_{\mathbb{R}^{2}} \int_{\mathbb{R}^{2}} \sigma(x, \xi) W(f,g)(x, \xi) d x d \xi,
\end{equation}
where $f, g \in \mathcal{S}\left(\mathbb{R}^{2},\mathbb{Q}\right).$
It can be seen that $W_{\sigma}: \mathcal{S}\left(\mathbb{R}^{2},\mathbb{Q}\right)
\rightarrow \mathcal{S}\left(\mathbb{R}^{2},\mathbb{Q}\right)$ is continuous.
\begin{theorem}\label{th501}
Let $\sigma\in L^{r}\left(\mathbb{R}^{4},\mathbb{Q}\right)$ for $1 \leq r \leq 2.$ Then the operator $W_\sigma:L^{2}\left(\mathbb{R}^{2},\mathbb{Q}\right) \rightarrow L^{2}\left(\mathbb{R}^{2},\mathbb{Q}\right)$ is bounded and $\|W_\sigma\|\leq \|\sigma\|_r.$
\end{theorem}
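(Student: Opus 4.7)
The plan is to reduce the $L^{r}$ bound to the two endpoint estimates $r=1$ and $r=2$ and then interpolate. Since the symbol enters $W_{\sigma}$ only through the sesquilinear pairing (\ref{exp515}), I will work with the bilinear form
\[
B(\sigma,f,g) := \langle W_{\sigma}f, g\rangle = \int_{\mathbb{R}^{4}} \sigma(x,\xi)\,W(f,g)(x,\xi)\,dx\,d\xi,
\]
and use the duality $\|W_{\sigma}f\|_{2} = \sup_{\|g\|_{2}\leq 1} |\langle W_{\sigma}f,g\rangle|$, which is available in the quaternion $L^{2}$ space through the Cauchy--Schwarz inequality applied to $|\langle f,g\rangle|\le \|f\|_{2}\|g\|_{2}$.

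For the endpoint $r=1$, pulling the modulus inside (\ref{exp515}) and invoking the $L^{\infty}$ estimate $\|W(f,g)\|_{\infty}\leq \|f\|_{2}\|g\|_{2}$ from Proposition \ref{prop502} gives
\[
|B(\sigma,f,g)| \leq \|\sigma\|_{1}\,\|W(f,g)\|_{\infty} \leq \|\sigma\|_{1}\,\|f\|_{2}\,\|g\|_{2}.
\]
For $r=2$, the Cauchy--Schwarz inequality combined with the $L^{2}$ part of Proposition \ref{prop502} (which is established via Plancherel together with the QFT identity (\ref{exp522})) yields
\[
|B(\sigma,f,g)| \leq \|\sigma\|_{2}\,\|W(f,g)\|_{2} \leq \|\sigma\|_{2}\,\|f\|_{2}\,\|g\|_{2}.
\]
Taking the supremum over $g$ with $\|g\|_{2}\leq 1$ shows that, for each fixed $f\in L^{2}(\mathbb{R}^{2},\mathbb{Q})$, the map $\sigma \mapsto W_{\sigma}f$ is bounded from $L^{1}(\mathbb{R}^{4},\mathbb{Q})$ and from $L^{2}(\mathbb{R}^{4},\mathbb{Q})$ into $L^{2}(\mathbb{R}^{2},\mathbb{Q})$, with operator norm at most $\|f\|_{2}$ in both cases.

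The conclusion for intermediate $1<r<2$ will follow by Riesz--Thorin interpolation applied to this linear operator. The point that will require a little care is that Riesz--Thorin is classically stated for complex-valued functions, whereas both $\sigma$ and $W_{\sigma}f$ take values in $\mathbb{Q}$. I expect this to be the only real obstacle, and it can be dealt with by the splitting $\sigma = \sigma_{+}+\sigma_{-}$ from (\ref{exp521})--(\ref{exp513}): $\mathcal{F}$ and hence $B(\sigma,f,g)$ decompose into pieces involving Euclidean Fourier transforms of complex-valued functions on the two planes $\{1,i\}(1\pm k)/2$, so classical Riesz--Thorin applies on each plane and the modulus identity $|q|^{2}=|q_{+}|^{2}+|q_{-}|^{2}$ recombines the bounds without loss of constant. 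Equivalently, since the pointwise estimate $|B(\sigma,f,g)|\le \int |\sigma|\,|W(f,g)|\,dx\,d\xi$ reduces the whole argument to Hölder's inequality on the real nonnegative integrands $|\sigma|$ and $|W(f,g)|$, one can interpolate directly between the $L^{1}$--$L^{\infty}$ and $L^{2}$--$L^{2}$ pairings to obtain $|B(\sigma,f,g)|\leq \|\sigma\|_{r}\|f\|_{2}\|g\|_{2}$ for $1\le r\le 2$, which is precisely the claimed operator bound $\|W_{\sigma}\|\leq \|\sigma\|_{r}$.
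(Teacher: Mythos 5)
Your proposal is correct and, in its final form, is essentially the paper's argument: the paper's proof is just ``follows directly from Proposition \ref{prop502},'' meaning the modulus bound $|\langle W_\sigma f,g\rangle|\le\int|\sigma|\,|W(f,g)|$, H\"older's inequality with conjugate exponents $r,r'$, the estimate $\|W(f,g)\|_{r'}\le\|f\|_2\|g\|_2$ (valid since $r'\ge2$), and $L^2$-duality --- which is exactly your concluding paragraph. The endpoint-plus-Riesz--Thorin detour, and the attendant worry about interpolation for quaternion-valued functions, is unnecessary once you observe (as you do) that the pointwise modulus estimate reduces everything to H\"older on nonnegative real integrands.
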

\begin{proof}
The proof will directly follow from Proposition \ref{prop502}.
\end{proof}
The definition of Weyl transform (\ref{exp515}) is defined as involving Wigner transform. The following result illuminates the Weyl transform independently, and we see that the Weyl transform is compact for some class of symbols.

\begin{theorem}\label{th504}
Let $\sigma \in\mathcal{S}\left(\mathbb{R}^{4},\mathbb{Q}\right)$ such that $\sigma(x, \xi)=\sigma(x, -\xi)$. Then for $\varphi\in L^2\left(\mathbb{R}^{2},\mathbb{Q}\right),$
\[\left(W_{\sigma} \varphi\right)(v)=\int_{\mathbb{R}^{2}} \int_{\mathbb{R}^{2}} e^{-2\pi i \xi_1 (v_1-u_1)}\sigma(\frac{u+v}{2}, \xi) e^{-2\pi j \xi_2 (v_2-u_2)}\varphi(u) d ud \xi.\]
\end{theorem}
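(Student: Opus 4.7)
The plan is to verify the claimed representation of $W_{\sigma}\varphi$ by pairing both sides against an arbitrary test function $g\in\mathcal{S}(\mathbb{R}^{2},\mathbb{Q})$, checking that the two resulting expressions for the inner product $\langle W_{\sigma}\varphi,g\rangle$ agree, and then invoking density of Schwartz functions in $L^{2}(\mathbb{R}^{2},\mathbb{Q})$ to conclude.

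Starting from the defining relation (\ref{exp515}) and substituting the explicit kernel (\ref{exp509}) of the Wigner transform, I would obtain, after an application of Fubini (justified by the Schwartz decay of $\sigma,\varphi,g$),
\[
\langle W_{\sigma}\varphi,g\rangle=\int\!\!\int\!\!\int \sigma(x,\xi)\,e^{-2\pi i\xi_{1}p_{1}}\,\varphi(x+p/2)\,\overline{g(x-p/2)}\,e^{-2\pi j\xi_{2}p_{2}}\,dp\,dx\,d\xi.
\]
Next I would perform the unimodular change of variables $u=x+p/2,\ v=x-p/2$ (so that $x=(u+v)/2$ and $p=u-v$), which transforms the above into
\[
\int\!\!\int\!\!\int \sigma\!\left(\tfrac{u+v}{2},\xi\right)\,e^{-2\pi i\xi_{1}(u_{1}-v_{1})}\,\varphi(u)\,\overline{g(v)}\,e^{-2\pi j\xi_{2}(u_{2}-v_{2})}\,du\,dv\,d\xi.
\]

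At this stage the hypothesis $\sigma(x,\xi)=\sigma(x,-\xi)$ enters: the further substitution $\xi\mapsto-\xi$ (with unit Jacobian) flips the sign of both exponents, so that $(u_{j}-v_{j})$ is replaced by $(v_{j}-u_{j})$, which is the sign convention required by the target formula. The remaining task is to rearrange the quaternion factors of the integrand so that $\overline{g(v)}$ sits on the far right and the integrand takes the form $e^{-2\pi i\xi_{1}(v_{1}-u_{1})}\sigma((u+v)/2,\xi)e^{-2\pi j\xi_{2}(v_{2}-u_{2})}\varphi(u)\cdot\overline{g(v)}$; this I would accomplish by invoking the splitting (\ref{exp521}) of $\sigma$ and of $\varphi(u)\overline{g(v)}$ into their $(1\pm k)/2$-components and using the commutator identities (\ref{exp520}) between $e^{\pm aj}$ and $e^{\mp ai}$ to pass the exponentials past $\sigma$ on one side and past the pointwise product $\varphi(u)\overline{g(v)}$ on the other. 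Once $\overline{g(v)}$ is isolated, a final application of Fubini to bring the $v$-integral to the outside identifies the coefficient as $(W_{\sigma}\varphi)(v)$, which matches the integral written in the statement.

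The principal obstacle is the non-commutativity of $\mathbb{Q}$: the $i$-exponential, the symbol $\sigma$, the $j$-exponential, and the functional factor $\varphi(u)\overline{g(v)}$ cannot be freely reordered, so every swap has to be justified through the $(1\pm k)/2$-decomposition together with the commutator relations (\ref{exp520}). The evenness hypothesis in $\xi$ is precisely what compensates for the sign changes generated by these rearrangements; without it, the sign on the exponentials in the stated kernel would be inconsistent with the one produced by the change of variables. The remaining work is straightforward bookkeeping with Fubini and the variable change $u=x+p/2,\ v=x-p/2$.
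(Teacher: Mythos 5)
Your proposal is correct and follows essentially the same route as the paper: substitute the Wigner kernel into the defining relation \eqref{exp515}, change variables $u=x+p/2$, $v=x-p/2$, and then use the evenness of $\sigma$ in $\xi$ together with the $(1\pm k)/2$ splitting and the commutator identities \eqref{exp520} to reorder the quaternion factors before peeling off $\overline{g(v)}$. Your version is in fact slightly more explicit than the paper's, which compresses the sign-flip $\xi\mapsto-\xi$ and the reordering into the phrase ``arguments like \eqref{exp516}, \eqref{exp517}''.
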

\begin{proof}
Let $\varphi,\,\psi\in L^2\left(\mathbb{R}^{2},\mathbb{Q}\right).$ From (\ref{exp515}), we have
\begin{align*}
\left\langle W_{\sigma} \varphi, \psi\right\rangle&=\int_{\mathbb{R}^{2}} \int_{\mathbb{R}^{2}} \sigma(x, \xi) W(\varphi,\psi)(x, \xi) d x d \xi\\
&=\int_{\mathbb{R}^{2}}\int_{\mathbb{R}^{2}} \int_{\mathbb{R}^{2}}\sigma(x, \xi) e^{-2\pi i \xi_1 p_1} \varphi\left(x+\frac{p}{2}\right) \overline{\psi\left(x-\frac{p}{2}\right)} e^{-2\pi j \xi_2 p_2}
dp\,dx\,d \xi.
\end{align*}
By the change of variables $u=x+\frac{p}{2}$ and $v=x-\frac{p}{2},$ the above integral becomes
\[\int_{\mathbb{R}^{2}}\int_{\mathbb{R}^{2}} \int_{\mathbb{R}^{2}}\sigma(\frac{u+v}{2}, \xi) e^{-2\pi i \xi_1 (v_1-u_1)} \varphi(u) \overline{\psi(v)} e^{-2\pi j \xi_2 (v_2-u_2)}dudvd\xi.\]
Using arguments like (\ref{exp516},~\ref{exp517}) and the fact that $\sigma(x,-\xi)=\sigma(x, \xi)$ allow
{\small \[\left\langle W_{\sigma} \varphi, \psi\right\rangle=\int_{\mathbb{R}^{2}}\left( \int_{\mathbb{R}^{2}} \int_{\mathbb{R}^{2}} e^{-2\pi i \xi_1 (v_1-u_1)}\sigma(\frac{u+v}{2}, \xi) e^{-2\pi j \xi_2 (v_2-u_2)}\varphi(u) d ud \xi\right) \overline{\psi(v)}   d v.\]}
Hence, we can conclude that
\[\left(W_{\sigma} \varphi\right)(v)=\int_{\mathbb{R}^{2}} \int_{\mathbb{R}^{2}} e^{-2\pi i \xi_1 (v_1-u_1)}\sigma(\frac{u+v}{2}, \xi) e^{-2\pi j \xi_2 (v_2-u_2)}\varphi(u) d ud \xi.\]
\end{proof}

\begin{theorem}
Let $\sigma \in L^{1}\left(\mathbb{R}^{4},\mathbb{Q}\right)$ such that $\sigma(x, \xi)=\sigma(x, -\xi).$ Then $W_{\sigma}: L^{2}\left(\mathbb{R}^{2},\mathbb{Q}\right) \rightarrow L^{2}\left(\mathbb{R}^{2},\mathbb{Q}\right)$ is a trace class operator.
\end{theorem}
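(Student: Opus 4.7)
The plan is to exploit the plane decomposition $\sigma = \sigma_+ + \sigma_-$ from (\ref{exp521}), which reduces the quaternion Weyl transform to two classical, complex-valued Weyl transforms. By the linearity of the assignment $\sigma \mapsto W_\sigma$, we have $W_\sigma = W_{\sigma_+} + W_{\sigma_-}$, so it will suffice to show that each $W_{\sigma_\pm}$ is of trace class on $L^2(\mathbb{R}^2, \mathbb{Q})$, after which the sum is trace class as well.

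First, starting from the kernel representation of $W_\sigma$ afforded by Theorem \ref{th504}, I would substitute $\sigma_\pm = \tilde{\sigma}_\pm \frac{1 \pm k}{2}$, where $\tilde{\sigma}_\pm$ is complex-valued with imaginary unit $i$ as given by (\ref{exp507}). Applying the commutator identity (\ref{exp520}), one pushes the idempotent factor $(1\pm k)/2$ past the right exponential $e^{-2\pi j \xi_2(v_2 - u_2)}$, converting it into $e^{\pm 2\pi i \xi_2(v_2 - u_2)}$. All remaining factors then lie in the commutative plane spanned by $1$ and $i$, so the two exponentials combine into $e^{-2\pi i (\xi_1(v_1 - u_1) \mp \xi_2(v_2 - u_2))}$. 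After the change of variable $\xi_2 \mapsto \mp \xi_2$ in the $\xi$-integration, the kernel of $W_{\sigma_\pm}$ is identified, up to the right multiplication by the idempotent $(1 \pm k)/2$, with the classical Weyl kernel on $\mathbb{R}^2$ for the Euclidean symbol $\tilde{\sigma}_\pm(x, \xi_1, \mp \xi_2)$.

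Next, I invoke the classical trace-class theorem for the Weyl transform from \cite{W}: a symbol in $L^1(\mathbb{R}^4)$ yields a trace class Weyl transform on $L^2(\mathbb{R}^2)$, with trace norm controlled by its $L^1$ norm. From (\ref{exp507}) and the modulus identity $|\sigma|^2 = |\sigma_+|^2 + |\sigma_-|^2$ we get the pointwise bound $|\tilde{\sigma}_\pm| = \sqrt{2}\,|\sigma_\pm| \leq \sqrt{2}\,|\sigma|$, hence $\tilde{\sigma}_\pm \in L^1(\mathbb{R}^4)$ and each classical Weyl transform is trace class. Right multiplication by the bounded projection $(1 \pm k)/2$ preserves the trace class ideal, so $W_{\sigma_+}$ and $W_{\sigma_-}$ are trace class on $L^2(\mathbb{R}^2, \mathbb{Q})$ with trace norms bounded by a constant times $\|\sigma\|_1$; summing the two pieces finishes the proof.

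The principal obstacle is the careful bookkeeping of the commutator manipulations from (\ref{exp520}): one must verify that after all rearrangements no cross-plane contribution survives, and that the right multiplication by $(1 \pm k)/2$ is compatible with the orthogonal direct sum $L^2(\mathbb{R}^2, \mathbb{Q}) = \frac{1+k}{2} L^2 \oplus \frac{1-k}{2} L^2$ so that trace class on each summand assembles into trace class on the whole. A secondary technical point is to confirm that the evenness hypothesis $\sigma(x, \xi) = \sigma(x, -\xi)$ needed to invoke Theorem \ref{th504} is inherited by $\sigma_\pm$ and is compatible with the change of variable $\xi_2 \mapsto \mp \xi_2$ used to identify the classical kernel form; this is where the hypothesis plays its essential role.
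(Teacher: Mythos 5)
Your reduction of the quaternion operator to two classical Weyl transforms via the split $\sigma=\sigma_++\sigma_-$ is sound and is in the spirit of how the paper itself handles the inversion formula in Section~\ref{sec4}; the bookkeeping with (\ref{exp520}) and the idempotents $\frac{1\pm k}{2}$ does work out, and the evenness in $\xi$ passes to $\sigma_\pm$ because the splitting acts pointwise on values. The genuine gap is the step where you invoke ``the classical trace-class theorem for the Weyl transform from \cite{W}: a symbol in $L^1(\mathbb{R}^4)$ yields a trace class Weyl transform with trace norm controlled by its $L^1$ norm.'' No such theorem is in \cite{W}, and the statement is false in general: for $\sigma\in L^1(\mathbb{R}^{2n})$ one gets only boundedness, $\|W_\sigma\|\le\|\sigma\|_1$, and compactness by approximation from $L^1\cap L^2$; membership in the trace class genuinely requires more than integrability of the symbol (e.g.\ positivity of $W_\sigma$, or a modulation-space condition on $\sigma$). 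The trace formula in \cite{W} takes $W_\sigma$ trace class as a \emph{hypothesis} rather than deriving it from $\sigma\in L^1$. So your argument transfers the problem faithfully to the Euclidean setting but then appeals to a result that is not available there, and this is exactly the point at which the proof has to do real work.

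For comparison, the paper's own proof takes the direct route: it reads off the integral kernel $K(u,v)=\int_{\mathbb{R}^2} e^{-2\pi i\xi_1(v_1-u_1)}\sigma(\frac{u+v}{2},\xi)e^{-2\pi j\xi_2(v_2-u_2)}\,d\xi$ from Theorem~\ref{th504} and then asserts $\|W_\sigma\|_{S_1}=\int|K(u,u)|\,du\le\|\sigma\|_1$. That identity for the trace norm is valid only for positive operators; for a general operator, integrability of the kernel's diagonal controls the would-be trace but not the trace norm, and does not by itself imply trace class. So the paper's argument rests on essentially the same unjustified step as yours: both reduce the claim to the finiteness of $\int|K(u,u)|\,du$ (equivalently, of $\int\!\!\int|\sigma|$). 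Your route is not wrong in its reduction, but the classical input you cite does not exist; a correct proof would need either an additional hypothesis on $\sigma$ or a genuinely different mechanism for summing the singular values.
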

\begin{proof}
Because of above Theorem \ref{th504}, $W_{\sigma}$ is an integral operator with the kernel
\begin{equation*}
K(u,v)=\int_{\mathbb{R}^{2}} e^{-2\pi i \xi_1 (v_1-u_1)}\sigma(\frac{u+v}{2}, \xi)e^{-2\pi j \xi_2 (v_2-u_2)}d \xi.
\end{equation*}
For $\sigma\in L^{1}\left(\mathbb{R}^4,\mathbb{Q}\right),$ we have
\begin{equation*}
\|W_\sigma\|_{S_1}=\int_{\mathbb{R}^{2}}|K(u,u)|du\leq
\int_{\mathbb{R}^{2}}\int_{\mathbb{R}^{2}}\left|\sigma(u, \xi)\right|dud\xi=\|\sigma\|_1.
\end{equation*}
Hence we conclude that $W_\sigma$ is a trace class operator.
\end{proof}
\begin{theorem}
Let $\sigma \in L^{r}\left(\mathbb{R}^{4},\mathbb{Q}\right), 1 \leq r \leq 2,$ such that $\sigma(x, \xi)=\sigma(x, -\xi).$ Then $W_{\sigma}: L^{2}\left(\mathbb{R}^{2},\mathbb{Q}\right) \rightarrow L^{2}\left(\mathbb{R}^{2},\mathbb{Q}\right)$ is a compact operator.
\end{theorem}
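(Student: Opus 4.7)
The plan is to establish compactness by a three-step endpoint-plus-interpolation argument: handle $r=1$ (already trace class, hence compact), handle $r=2$ by showing $W_\sigma$ is Hilbert--Schmidt, and fill in $1<r<2$ by approximating in the $L^r$ norm by symbols lying in $L^1\cap L^2$ and invoking the closedness of the ideal of compact operators. The boundedness estimate $\|W_\sigma\|\le\|\sigma\|_r$ from Theorem \ref{th501} is precisely what powers the interpolation / density step, so no new operator bound has to be proved.

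\textbf{Step 1 (endpoint $r=2$).} By Theorem \ref{th504}, $W_\sigma$ is the integral operator with kernel
\[
K(u,v)=\int_{\mathbb{R}^{2}} e^{-2\pi i\xi_1(v_1-u_1)}\,\sigma\!\left(\tfrac{u+v}{2},\xi\right)e^{-2\pi j\xi_2(v_2-u_2)}\,d\xi,
\]
which is, up to the change of variables $(x,y)=(\tfrac{u+v}{2},v-u)$ with Jacobian $1$, the partial quaternion Fourier transform of $\sigma$ in the $\xi$-variable evaluated at $(x,y)$. Using the Plancherel theorem for the QFT in the $\xi$ variable and then changing variables back, I obtain
\[
\int_{\mathbb{R}^{2}}\int_{\mathbb{R}^{2}}|K(u,v)|^{2}\,du\,dv=\int_{\mathbb{R}^{2}}\int_{\mathbb{R}^{2}}|\sigma(x,\xi)|^{2}\,dx\,d\xi=\|\sigma\|_{2}^{2}.
\]
Thus $K\in L^{2}(\mathbb{R}^{2}\times\mathbb{R}^{2},\mathbb{Q})$, so $W_\sigma$ is a Hilbert--Schmidt operator on $L^{2}(\mathbb{R}^{2},\mathbb{Q})$, and in particular compact, with $\|W_\sigma\|_{HS}=\|\sigma\|_{2}$.

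\textbf{Step 2 (density for $1<r<2$).} Given $\sigma\in L^{r}(\mathbb{R}^{4},\mathbb{Q})$ even in $\xi$, I truncate symmetrically by setting
\[
\sigma_{n}(x,\xi)=\sigma(x,\xi)\,\mathbf{1}_{\{|x|\le n,\,|\xi|\le n,\,|\sigma(x,\xi)|\le n\}},
\]
which is still even in $\xi$ and belongs to $L^{1}(\mathbb{R}^{4},\mathbb{Q})\cap L^{2}(\mathbb{R}^{4},\mathbb{Q})$. By the previous theorem $W_{\sigma_{n}}$ is trace class (hence compact), and $\sigma_{n}\to\sigma$ in $L^{r}$ by dominated convergence. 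Theorem \ref{th501} then yields
\[
\|W_\sigma-W_{\sigma_{n}}\|\le\|\sigma-\sigma_{n}\|_{r}\longrightarrow 0.
\]
Since the compact operators form a norm-closed two-sided ideal in $\mathcal{B}(L^{2}(\mathbb{R}^{2},\mathbb{Q}))$, the limit $W_\sigma$ is compact.

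\textbf{Main obstacle.} The only genuinely delicate point is Step 1: the quaternion Plancherel identity in the kernel computation must be applied in a variable that is sandwiched non-commutatively between $e^{-2\pi i\xi_1(\cdot)}$ on the left and $e^{-2\pi j\xi_2(\cdot)}$ on the right. One has to check that the change-of-variables $\xi\mapsto(v-u)$ and $(x,y)=(\tfrac{u+v}{2},v-u)$ respects the modulus identity $|q|^{2}=|q_+|^{2}+|q_-|^{2}$, so that the QFT Plancherel theorem (proved in the excerpt for the two-sided QFT) really equates $\|K\|_{2}$ with $\|\sigma\|_{2}$. The hypothesis $\sigma(x,\xi)=\sigma(x,-\xi)$, already used to derive the symmetric kernel representation in Theorem \ref{th504}, together with the reformulations in equations (\ref{exp516})--(\ref{exp517}) of the QFT, is what lets this computation go through cleanly without interference from the non-commutativity of $\mathbb{Q}$.
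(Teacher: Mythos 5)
Your proposal is correct and follows essentially the same route as the paper: use the kernel representation from Theorem \ref{th504} and the QFT Plancherel theorem to show $W_\sigma$ is Hilbert--Schmidt with $\|W_\sigma\|_{HS}=\|\sigma\|_2$ when $\sigma\in L^2$, then approximate a general $\sigma\in L^r$ in the $L^r$ norm, invoke the bound $\|W_\sigma\|\le\|\sigma\|_r$ of Theorem \ref{th501}, and conclude by norm-closedness of the compact operators. The only (harmless) difference is that you approximate by truncations lying in $L^1\cap L^2$ while the paper uses Schwartz symbols; your truncation has the small advantage of manifestly preserving the evenness hypothesis $\sigma(x,\xi)=\sigma(x,-\xi)$ needed for the kernel representation of the approximants.
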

\begin{proof}
In view of Theorem \ref{th504}, we obtain that $W_{\sigma}$ is an integral operator with the kernel
\begin{equation*}
K(u,v)=\int_{\mathbb{R}^{2}} e^{-2\pi i \xi_1 (v_1-u_1)}\sigma(\frac{u+v}{2}, \xi)e^{-2\pi j \xi_2 (v_2-u_2)}d \xi.
\end{equation*}
Suppose $\sigma\in L^{2}\left(\mathbb{R}^4,\mathbb{Q}\right),$ then the Hilbert-Schmidt norm of $W_\sigma$ will be
\begin{equation}\label{exp508}
\begin{aligned}
\|W_\sigma\|_{S_2}^2=&\int_{\mathbb{R}^{2}}\int_{\mathbb{R}^{2}}|K(u,v)|^2dudv=\int_{\mathbb{R}^{2}}\int_{\mathbb{R}^{2}}\left|K(x-\frac{p}{2},x+\frac{p}{2})\right|^2dxdp\\
=&\int_{\mathbb{R}^{2}}\int_{\mathbb{R}^{2}}\left|\int_{\mathbb{R}^{2}} e^{-2\pi i \xi_1 p_1}\sigma(x, \xi)e^{-2\pi j \xi_2 p_2}d \xi\right|^2dxdp\\
=&\int_{\mathbb{R}^{2}}\left(\int_{\mathbb{R}^{2}}|\left(\mathcal{F}_2\sigma(x,\cdot)\right)(p)|^2dp\right)dx,
\end{aligned}
\end{equation}
where $\mathcal{F}_2\sigma$ denotes the QFT in second variable. Now by the Plancherel theorem we can conclude that $\|W_\sigma\|_{S_2}^2=\|\sigma\|_2^2.$

Let $\sigma \in L^{r}\left(\mathbb{R}^{4},\mathbb{Q}\right), 1 \leq r \leq 2 .$ Then we can choose
a sequence $\left\{\sigma_{k}\right\}_{k=1}^{\infty}$ of functions in $\mathcal{S}\left(\mathbb{R}^{4},\mathbb{Q}\right)$
such that $\sigma_{k} \rightarrow \sigma$ in $L^{r}\left(\mathbb{R}^{4},\mathbb{Q}\right)$ as
$k \rightarrow \infty$. Therefore, for each $k\in\mathbb{N}$,  $W_{\sigma_{k}}$ is a Hilbert-Schmidt
operator, and hence it is compact. By Theorem (\ref{th501}), $W_{\sigma}$ is the limit of the sequence
$\left\{W_{\sigma_{k}}\right\}_{k=1}^{\infty}$ in the space of bounded linear operators on
$L^{2}\left(\mathbb{R}^{2},\mathbb{Q}\right).$ Thus, $W_{\sigma}$ is compact.
\end{proof}

Next results will give necessary and sufficient condition for boundedness of the Weyl transform for $r>2.$
Now onwards, in this section, $r$ and $r'$ are conjugate indices i.e., $\frac{1}{r}+\frac{1}{r'}=1.$
\begin{proposition}\label{prop501}
The following two statements are equivalent:	
\begin{enumerate}
\item The Weyl transform $W_{\sigma}$ is a bounded linear operator on
$L^{2}\left(\mathbb{R}^{2},\mathbb{Q}\right)$ for all
$\sigma \in L^{r}\left(\mathbb{R}^{4},\mathbb{Q}\right),$ where $ 2<r<\infty.$
\item There exists a positive constant $C$ such that
$\|W(f, g)\|_{r'}\leq C\|f\|_2\|g\|_2$ for all $f,g\in L^2\left(\mathbb{R}^{2},\mathbb{Q}\right).$
\end{enumerate}	
\end{proposition}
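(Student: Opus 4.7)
The plan is to establish the equivalence through a duality argument between the bilinear Wigner map $W:L^2\times L^2\to L^{r'}$ and the linear assignment $\sigma\mapsto W_\sigma$ from $L^r$ into the bounded operators on $L^2(\mathbb{R}^2,\mathbb{Q})$. In one direction H\"older's inequality suffices; in the other, the closed graph theorem converts pointwise boundedness into a uniform norm estimate.

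To show $(2)\Rightarrow(1)$, I would start from the defining identity (\ref{exp515}). Using the pointwise quaternion inequality $|\sigma\cdot W(f,g)|\le|\sigma|\,|W(f,g)|$ and the scalar H\"older inequality, together with the hypothesized $L^{r'}$ bound, one obtains
\[
|\langle W_\sigma f,g\rangle|\le\|\sigma\|_r\,\|W(f,g)\|_{r'}\le C\|\sigma\|_r\|f\|_2\|g\|_2.
\]
Taking the supremum over $g$ with $\|g\|_2\le 1$ yields $\|W_\sigma f\|_2\le C\|\sigma\|_r\|f\|_2$, which is (1).

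For $(1)\Rightarrow(2)$, I would first upgrade the hypothesis to a quantitative bound $\|W_\sigma\|\le C\|\sigma\|_r$ via the closed graph theorem applied to the $\mathbb{R}$-linear map $\sigma\mapsto W_\sigma$. Given a sequence $\sigma_n\to\sigma$ in $L^r$ with $W_{\sigma_n}\to S$ in operator norm, testing against Schwartz $f,g$ (for which $W(f,g)\in\mathcal{S}(\mathbb{R}^4,\mathbb{Q})$ lies in every $L^p$, in particular $L^{r'}$) gives
\[
\langle W_{\sigma_n}f,g\rangle=\int\sigma_n\cdot W(f,g)\longrightarrow\int\sigma\cdot W(f,g)=\langle W_\sigma f,g\rangle
\]
by H\"older, while the same pairing also tends to $\langle Sf,g\rangle$, forcing $S=W_\sigma$. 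The bound so obtained shows that for any fixed $f,g\in L^2$ the quaternion-valued functional $\sigma\mapsto\int\sigma\,W(f,g)$ is continuous on $L^r$ with norm at most $C\|f\|_2\|g\|_2$. Identifying this norm with $\|W(f,g)\|_{r'}$ then delivers (2).

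The main obstacle is to justify the final duality identification in the quaternion-valued setting: one must verify that for $h\in L^{r'}(\mathbb{R}^4,\mathbb{Q})$,
\[
\|h\|_{r'}=\sup\Bigl\{\Bigl|\int_{\mathbb{R}^4}\sigma\cdot h\Bigr|:\sigma\in L^r(\mathbb{R}^4,\mathbb{Q}),\ \|\sigma\|_r\le 1\Bigr\},
\]
and non-commutativity prevents a direct appeal to the scalar case. The upper bound is H\"older; for the reverse, take the extremal symbol $\sigma=\bar h\,|h|^{r'-2}/\|h\|_{r'}^{r'-1}$ (set $\sigma=0$ where $h$ vanishes). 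Since $\bar h h=|h|^2$, the product $\sigma h=|h|^{r'}/\|h\|_{r'}^{r'-1}$ is pointwise real and nonnegative, so $\int\sigma h=\|h\|_{r'}$; the identity $|\bar h|=|h|$ and the exponent relation $(r'-1)r=r'$ force $\|\sigma\|_r=1$. With this quaternion duality secured, both implications above become routine applications of H\"older's inequality and Proposition~\ref{prop502}.
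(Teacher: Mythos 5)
Your argument is correct and follows essentially the route the paper indicates (the paper omits the proof, appealing to the Euclidean case via the uniform boundedness principle): H\"older's inequality gives $(2)\Rightarrow(1)$, and an automatic-continuity argument (closed graph, interchangeable with uniform boundedness here) followed by converse H\"older duality gives $(1)\Rightarrow(2)$. Your explicit check of the quaternion $L^{r}$--$L^{r'}$ duality with the extremal symbol $\bar h\,|h|^{r'-2}/\|h\|_{r'}^{r'-1}$ supplies the one genuinely non-commutative detail the paper leaves implicit; the only point to tidy is that in the application the converse H\"older step is invoked before $W(f,g)\in L^{r'}$ is known, so one should first run it on truncations of $W(f,g)$ (or on Schwartz $f,g$ and pass to general $f,g\in L^{2}$ by density and Fatou).
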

Since the proof of the above Proposition will continue with the same line as in the Euclidean case
(using the uniform boundedness principle), we will skip it here.
\begin{proposition}
Let $f\in L^2\left(\mathbb{R}^{2},\mathbb{Q}\right)$ be a compactly supported function such that $\int_{\mathbb{R}^2} f(x) dx \neq 0.$
If $W_{\sigma}$ is a bounded operator on $L^2\left(\mathbb{R}^{2},\mathbb{Q}\right)$
for all $\sigma \in L^{r}\left(\mathbb{R}^{4},\mathbb{Q}\right),$ where $2<r<\infty,$ then $\|\mathcal{F}(f)\|_{r'}<\infty.$
\end{proposition}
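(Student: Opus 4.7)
My plan is to invoke Proposition \ref{prop501} with a carefully chosen second argument $g$, and then to compute $W(f,g)$ explicitly enough to see $\mathcal{F}(f)$ inside it. From the hypothesis together with Proposition \ref{prop501}, there is a constant $C>0$ such that $\|W(f,g)\|_{r'}\leq C\|f\|_{2}\|g\|_{2}$ for every $g\in L^{2}(\mathbb{R}^{2},\mathbb{Q})$. It therefore suffices to exhibit one admissible $g\in L^{2}(\mathbb{R}^{2},\mathbb{Q})$ for which $|W(f,g)(x,\xi)|$ dominates a positive multiple of $|\mathcal{F}(f)(2\xi)|$ on a set of finite positive measure in the $x$-variable.

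Fix $R>0$ with $\mathrm{supp}\,f\subset B(0,R)$ and take $g=\chi_{K}$, where $K=\overline{B(0,3R)}$. This $g$ is real-valued and lies in $L^{2}(\mathbb{R}^{2},\mathbb{Q})$. For $|x|\leq R$, whenever the integrand in (\ref{exp509}) is non-zero at $p$ we have $|x+p/2|\leq R$, hence $|x-p/2|\leq 2|x|+R\leq 3R$, so $\overline{g(x-p/2)}=1$. Performing the substitution $y=x+p/2$ in (\ref{exp509}) and using that $e^{4\pi i\xi_{1}x_{1}}$ commutes with $e^{-4\pi i\xi_{1}y_{1}}$ (both are $i$-complex exponentials), and similarly for the $j$-exponentials which sit on the right of $f(y)$, the $x$-dependent phases pull outside the $y$-integral as unimodular quaternions, and one arrives at
\[
W(f,g)(x,\xi)=4\,e^{4\pi i\xi_{1}x_{1}}\,\mathcal{F}(f)(2\xi)\,e^{4\pi j\xi_{2}x_{2}},\qquad |x|\leq R.
\]
Since $|e^{i\theta}|=|e^{j\theta}|=1$ and the quaternion modulus is multiplicative, this yields $|W(f,g)(x,\xi)|=4\,|\mathcal{F}(f)(2\xi)|$ on the cylinder $\{|x|\leq R\}\times\mathbb{R}^{2}$.

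Raising this pointwise identity to the $r'$-th power, integrating over the cylinder, and rescaling via $\eta=2\xi$ gives the lower bound
\[
\|W(f,g)\|_{r'}^{r'}\geq 4^{r'-1}\,|B(0,R)|\,\|\mathcal{F}(f)\|_{r'}^{r'},
\]
which, combined with the upper bound $\|W(f,g)\|_{r'}\leq C\|f\|_{2}\|g\|_{2}<\infty$ from Proposition \ref{prop501}, forces $\|\mathcal{F}(f)\|_{r'}<\infty$. The main delicate point is the quaternionic bookkeeping: the reduction to $\mathcal{F}(f)(2\xi)$ collapses cleanly only because the $i$-exponentials assemble on the left and the $j$-exponentials on the right of $f(y)$, so the $x$-dependent factors can be stripped off as unimodular constants that do not affect the modulus. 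The hypothesis $\int f\neq 0$ does not enter the argument above; its role is presumably to guarantee $\mathcal{F}(f)(0)\neq 0$, which is what one needs in the subsequent application of this proposition to produce a compactly supported $f$ with $\mathcal{F}(f)\notin L^{r'}$, thereby exhibiting a symbol $\sigma\in L^{r}$ for which $W_{\sigma}$ fails to be bounded.
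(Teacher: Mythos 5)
Your proof is correct, but it follows a genuinely different route from the paper's. The paper tests the Wigner transform against $f$ itself: it forms $W(\tilde f^i,\bar{\tilde f})$ with $\bar{\tilde f}(x)=\overline{f(-x)}$, observes that this vanishes for $x$ outside the (unit) disk containing the support, and then integrates it in $x$ against the phase $e^{4\pi i\xi_1x_1}(\cdot)\,e^{4\pi j\xi_2x_2}$; Minkowski's integral inequality keeps the resulting function of $\xi$ in $L^{r'}$, and the change of variables $u=x+\frac{p}{2}$, $v=x-\frac{p}{2}$ decouples the double integral into the product $\mathcal{F}(f)(0)\,\mathcal{F}(f)(2\xi)$. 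That is precisely where the hypothesis $\int_{\mathbb{R}^2} f\neq0$ is consumed: one divides by the nonzero quaternion $\mathcal{F}(f)(0)$. You instead freeze the second slot with $g=\chi_{\overline{B(0,3R)}}$, so that for $|x|\leq R$ the factor $\overline{g(x-\frac{p}{2})}$ is identically $1$ on the support of the integrand and $W(f,g)(x,\xi)$ collapses to $4\,e^{4\pi i\xi_1x_1}\mathcal{F}(f)(2\xi)\,e^{4\pi j\xi_2x_2}$; multiplicativity of the quaternion modulus then gives $|W(f,g)(x,\xi)|=4\,|\mathcal{F}(f)(2\xi)|$ on a cylinder of positive finite $x$-measure, and integrating the $r'$-th power produces the lower bound directly, with no Minkowski step and no use of $\int f\neq0$. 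Your quaternionic bookkeeping is sound: the $i$-exponentials assemble on the left of $f$ and the $j$-exponentials on the right, so the $x$-dependent phases strip off as unimodular factors without commutation problems. Your version buys a marginally stronger statement (the hypothesis $\int f\neq0$ is not needed here, only later to guarantee $\mathcal{F}(f_\alpha)(0)\neq 0$ in the counterexample, exactly as you note) and a shorter computation; the paper's version stays closer to Simon's classical argument by keeping $f$ in both slots. Both rest on the same essential input, namely the implication from boundedness of $W_\sigma$ to the uniform estimate $\|W(f,g)\|_{r'}\leq C\|f\|_2\|g\|_2$ in Proposition \ref{prop501}.
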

\begin{proof}
Let $f\in L^2\left(\mathbb{R}^{2},\mathbb{Q}\right)$ be supported on the unit disk
$\left\{x \in \mathbb{R}^{2}:|x| \leq 1\right\}.$ Consider the functions $\tilde{f}^i$ defined in (\ref{exp506}) and $\bar{\tilde{f}}(x)=\overline{f(-x)}.$ Then $\tilde{f}^i,\,\bar{\tilde{f}}\in L^2\left(\mathbb{R}^{2},\mathbb{Q}\right)$ and both are supported on the unit disk. Further, $W(\tilde{f}^i,\bar{\tilde{f}})(x, \xi) \neq 0,$ if
$|x| \leq \frac{1}{2}\left|\left(x+\frac{p}{2}\right)+\left(x-\frac{p}{2}\right)\right| \leq 1$
and $W(\tilde{f}^i,\bar{\tilde{f}})(x, \xi)=0$ for all $\xi\in\mathbb{R}^2,$ if $|x|>1.$ By
Proposition \ref{prop501}, we get
\[\int_{\mathbb{R}^{2}} \int_{\mathbb{R}^{2}}|W(\tilde{f}^i,\bar{\tilde{f}})(x, \xi)|^{r^{\prime}} d x d \xi<\infty.\]
Therefore, by the Minkowshki's integral inequality
\begin{align*}
&\left(\int_{\mathbb{R}^{2}}\left|\int_{|x| \leq 1}e^{4\pi i\xi_1x_1} W(\tilde{f}^i,\bar{\tilde{f}})(x, \xi)
e^{4\pi j\xi_2x_2} d x\right|^{r^{\prime}} d \xi\right)^{\frac{1}{r^{\prime}}} \\
&\leq \int_{|x| \leq 1}\left(\int_{\mathbb{R}^{2}}|W(\tilde{f}^i,\bar{\tilde{f}})(x, \xi)|^{r^{\prime}}
d \xi\right)^{\frac{1}{r^{\prime}}} d x \\
&\leq\left(\int_{|x| \leq 1} d x\right)^{\frac{1}{r}}\left(\int_{\mathbb{R}^{2}}
\int_{\mathbb{R}^{2}}|W(\tilde{f}^i,\bar{\tilde{f}})(x, \xi)|^{r^{\prime}} d x d \xi\right)^{\frac{1}{r^{\prime}}}<\infty.
\end{align*}

Thus,
\begin{equation}\label{exp518}
\int_{|x| \leq 1}e^{4\pi i\xi_1x_1} W(\tilde{f}^i,\bar{\tilde{f}})(x, \xi) e^{4\pi j\xi_2x_2}
dx\in L^{r^{\prime}}\left(\mathbb{R}^{2},\mathbb{Q}\right).
\end{equation}
But
\begin{align*}
&\int_{|x| \leq 1}e^{4\pi i\xi_1x_1} W(\tilde{f}^i,\bar{\tilde{f}})(x, \xi) e^{4\pi j\xi_2x_2} d x \\
=&\int_{\mathbb{R}^{2}} \int_{\mathbb{R}^{2}} e^{4\pi i \xi_1\left(x_1-\frac{p_1}{2}\right)} \tilde{f}^i\left(x+\frac{p}{2}\right) \tilde{f}\left(x-\frac{p}{2}\right)e^{4\pi j \xi_2\left(x_2-\frac{p_2}{2}\right)} d x d p\\
=&\int_{\mathbb{R}^{2}} \int_{\mathbb{R}^{2}} e^{4\pi i \xi_1v_1} \tilde{f}^i(u) \tilde{f}(v)e^{4\pi j \xi_2v_2} d u d v,
\end{align*}
where the last equality follows by the change of variables $u=x+\frac{p}{2}\mbox{ and }v=x-\frac{p}{2}.$
A similar calculation as in (\ref{exp517}) leads to
\begin{align*}
\int_{|x| \leq 1}e^{4\pi i\xi_1x_1} W(\tilde{f}^i,\bar{\tilde{f}})(x, \xi) e^{4\pi j\xi_2x_2} d x
&=\int_{\mathbb{R}^{2}} f(u)\int_{\mathbb{R}^{2}} e^{-4\pi i \xi_1v_1} f(v)e^{-4\pi j \xi_2v_2} dv du \\
&=\mathcal{F}(f)(0) \mathcal{F}(f)(2 \xi).
\end{align*}
From (\ref{exp518}), it follows that $\|\mathcal{F}(f)\|_{r'}<\infty.$
\end{proof}

Now, we will give an example of the above described function.

\begin{example}
Let's first recall the example of such function for the Euclidean Fourier transform, which was given
by Simon \cite{Si}. Consider the cube $$Q=\left\{x \in \mathbb{R}^{n}:-a \leq x_{j} \leq a,~ j=1,2, \ldots, n\right\}$$
lying inside $\left\{x \in \mathbb{R}^{n}:|x| \leq 1\right\}.$ For $\alpha \in\left(0, \frac{1}{2}\right)$
consider the function $g_\alpha$ on $\mathbb{R}^{n}$ defined by
\begin{equation}\label{exp512}
g_\alpha(x)=\left\{\begin{array}{ll}
	\prod_{j=1}^{n}\left|x_{j}\right|^{-\alpha}, & x \in Q\setminus \{0\} \\
	0, & \text { otherwise. }
\end{array}\right.
\end{equation}
Here $g_\alpha$ is square integrable, compactly supported real valued function on $\mathbb{R}^n$ with
$\int_{\mathbb{R}^n} g_\alpha(x) dx \neq 0.$ For each $1<r'<2,$ there exists $\alpha\in\left(0, \frac{1}{2}\right)$ such that $\int_{\mathbb{R}^n}|\hat{g}_\alpha(\xi)|^{r'} d\xi=\infty,$
where $\hat{g}_\alpha$ is the Euclidean Fourier transform of $g_\alpha.$

\smallskip

Now, we will modify  the above example
for quaternion Fourier transform on $\mathbb{R}^2.$ Let $Q$ be the cube on $\mathbb{R}^2$ as defined
above and $\alpha \in\left(0, \frac{1}{2}\right).$ Then define
\begin{equation}
f_\alpha(x)=g_\alpha(x)\frac{1-k}{2},
\end{equation}
where $g_\alpha(x)$ is a function on $\mathbb{R}^2$ define in (\ref{exp512}). From (\ref{exp513}) we get
\[\mathcal{F}(f_\alpha)(\xi)=\int_{\mathbb{R}^{2}} e^{-2 \pi i\left(x_{1} \xi_{1} + x_{2} \xi_{2}\right)}
f_\alpha(x_1,x_2) d x_{1} d x_{2} =\hat{g}_\alpha(\xi)\frac{1-k}{2}.\]
Hence from the Euclidean setup it follows that for each $1<r'<2,$ there exists $\alpha\in\left(0, \frac{1}{2}\right)$ such that $\int_{\mathbb{R}^2}|\mathcal{F}(f_\alpha) (\xi)|^{r'} d\xi=\infty.$
\end{example}

\section{Uniqueness results}\label{sec4}
\subsection{Benedicks-Amrein-Berthier type theorem for the Weyl transform.}
A fruitful method of working on certain problems in the Heisenberg group is that, instead of considering
the group Fourier transform, it is enough to consider the following Weyl transform.
The Weyl transform of $h\in L^1(\mathbb C^n)$ is defined by
\begin{align}\label{exp701}
\mathcal{W}(h)=\int_{\mathbb C^n} h(z)\pi(z) dz,
\end{align}
where for $z=x+iy\in \mathbb C^n,$ $\pi(z)$ is an operator on $L^2(\mathbb R^n)$ defined by
\begin{align}
\pi(z)\psi(\xi)=e^{2\pi i(x\cdot\xi+\frac{1}{2}x\cdot y)}\psi(\xi+y).
\end{align}
Further for $\psi_1,\psi_2\in L^2(\mathbb R^n),$ the following square integrability relation holds
\begin{equation}\label{exp705}
\int_{\mathbb C^n}|\langle \pi(z)\psi_1,\psi_2 \rangle|^2 dz=\|\psi_1\|^2 \|\psi_2\|^2.
\end{equation}
In addition, the inversion formula for the Weyl transform holds (see \cite{T1}),
\begin{equation}\label{exp703}
h(z)=\text{tr}\left(\mathcal{W}(h)\pi(-x,-y)\right)
\end{equation}
and satisfies the Plancherel formula
\begin{equation}\label{exp711}
\|\mathcal{W}(h)\|_{HS}=\|h\|_2.
\end{equation}
In \cite{NR,V,GS}, through different approaches, it is proved that a non-zero function $h\in L^1(\mathbb C^n)$
supported in a finite measure set can not have finite rank Weyl transform $\mathcal{W}(h).$
In this section, we prove an analogue result for the Weyl transform defined in terms of QFT.

\smallskip

Consider the Weyl transform defined in (\ref{exp515}) and assume that $\sigma$ is even. From
(\ref{exp522}) and (\ref{exp509}) we have
{\footnotesize \[\langle W_{\sigma}\varphi,\psi\rangle =\int_{\mathbb{R}^{2}}\int_{\mathbb R^2}\sigma(x,\xi)
\int_{\mathbb{R}^{2}} \int_{\mathbb R^2} e^{-2\pi i(x_1q_+\xi_1p_1)}V(\varphi, \psi)(q, p)
e^{-2\pi j(x_2q_2+\xi_2p_2)} d q d p \, dx d\xi.\]}
By proceeding through similar techniques as in (\ref{exp516}, \ref{exp517}) and then using
$\sigma(-x,-\xi)=\sigma(x,\xi)$ we have
{\footnotesize \begin{align*}
\langle W_{\sigma}\varphi,\psi\rangle &=\int_{\mathbb{R}^{2}}\int_{\mathbb R^2}\left(\int_{\mathbb{R}^{2}} \int_{\mathbb R^2}
e^{-2\pi i(x_1q_1+\xi_1p_1)}\sigma(x, \xi)e^{-2\pi j(x_2q_2+\xi_2p_2)} dx d\xi\right)
V(\varphi, \psi)(q, p) d q d p \\
&=\int_{\mathbb{R}^{2}}\int_{\mathbb R^2}(\mathcal{F}\sigma)(q,p)\int_{\mathbb{R}^{2}}
e^{2 \pi i\left(q_{1} x_{1}+\frac{1}{2} q_{1} p_{1}\right)}{\varphi(x+p)} \overline{\psi(x)}
e^{2 \pi j\left(q_2 x_{2}+\frac{1}{2} q_{2} p_{2}\right)} d x \, d q d p,
\end{align*}}
where the last equality holds from (\ref{exp514}). Again using an argument like (\ref{exp516}) and the
fact that $\sigma(-x,-\xi)=\sigma(x, \xi)$ implies $(\mathcal{F}\sigma)(-q,-p)=(\mathcal{F}\sigma)(q,p),$ leads to
{\footnotesize \begin{equation}\label{exp710}
\langle W_{\sigma}\varphi,\psi\rangle=\int_{\mathbb{R}^{2}}\left(\int_{\mathbb R^2}\int_{\mathbb{R}^{2}}
(\mathcal{F}\sigma)(q,p)e^{2 \pi i\left(q_{1} x_{1}+\frac{1}{2} q_{1} p_{1}\right)}{\varphi(x+p)}
e^{2 \pi j\left(q_2 x_{2}+\frac{1}{2} q_{2} p_{2}\right)} d q d p \right) \overline{\psi(x)} d x.
\end{equation}}
Thus, in view of (\ref{exp701}) and (\ref{exp710}), we can define the following Weyl type transform.
To specify, for each $x=(x_1,x_2),~ y=(y_1,y_2)\in\mathbb R^2,$ consider the operator
$\rho(x,y):L^2(\mathbb R^2,\mathbb Q)\rightarrow L^2(\mathbb R^2,\mathbb Q)$
defined by
\begin{align}
\rho(x,y)\varphi(\xi)=e^{2\pi i(x_1\xi_1+\frac{1}{2}x_1y_1)}\varphi(\xi+y)e^{2\pi j(x_2\xi_2+\frac{1}{2}x_2y_2)},
\end{align}
where $\varphi\in L^2(\mathbb R^2,\mathbb Q).$

Let $x,y,u,v\in \mathbb R^2.$ Then the following relation holds
\begin{align}\label{exp709}
\rho(u,v)\rho(x,y)\varphi(\xi)=e^{\pi i(x_1v_1-u_1y_1)}\rho(u+x,v+y)\varphi(\xi)e^{\pi j(x_2v_2-u_2y_2)},
\end{align}
where $\varphi\in L^2(\mathbb R^2,\mathbb Q)$ and $\xi\in \mathbb R^2.$

For $g\in L^1(\mathbb R^4,\mathbb Q),$ define the Weyl transform of $g$ by
\begin{align*}
W(g)=\int_{\mathbb R^2}\int_{\mathbb R^2}g(x,y)\rho(x,y)dx dy.
\end{align*}

Consider the subspaces $L_\pm^2(\mathbb R^2,\mathbb Q)=\{\varphi_\pm:\varphi\in L^2(\mathbb R^2,\mathbb Q)\}$
and let $P_\pm$ be the projections of $L^2(\mathbb R^2,\mathbb Q)$ onto $L_\pm^2(\mathbb R^2,\mathbb Q),$ respectively.
Since $(1-k)(a+jb)=(a+ib)(1-k)$ for $a,b\in\mathbb R,$ we have
\begin{equation}\label{exp702}
\rho(u,v)\rho(x,y)\varphi_{-}=\pi(u,v)\pi(x,y)\varphi_{-}.
\end{equation}

Next, we shall see that $g$ can be retrieved from the Weyl transform by the following inversion formula.
\begin{theorem}{\em (Inversion formula)}\label{th701}
Let $g\in L^1(\mathbb R^4,\mathbb Q).$ Then
\[g(x,y)=\text{tr}\left(W(g)\rho(-x,-y)P_{-}\right)\text{ a.e.}\]
\end{theorem}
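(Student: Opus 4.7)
The plan is to expand $W(g)\rho(-x,-y)P_{-}$ directly and reduce the quaternionic trace to the standard trace of the Euclidean Weyl representation on $L^{2}_{-}$.

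First, I would apply the composition rule (\ref{exp709}) with $(x,y)\mapsto(-x,-y)$, substitute into $W(g)\rho(-x,-y)\varphi=\int\!\!\int g(u,v)\rho(u,v)\rho(-x,-y)\varphi\,du\,dv$, and change variables $(u,v)\mapsto(u+x,v+y)$; the cross terms in the exponents cancel, yielding
\begin{equation*}
W(g)\rho(-x,-y)\varphi(\xi)=\int\!\!\int g(u+x,v+y)\,e^{\pi i(u_{1}y_{1}-x_{1}v_{1})}\,\rho(u,v)\varphi(\xi)\,e^{\pi j(u_{2}y_{2}-x_{2}v_{2})}\,du\,dv.
\end{equation*}
Applying $P_{-}$ and invoking (\ref{exp702}) (which says $\rho|_{L^{2}_{-}}=\pi$, the classical Euclidean Weyl representation), together with the consequence of (\ref{exp520}) that for $\psi_{-}\in L^{2}_{-}$ and $a\in\mathbb{R}$ one has $\psi_{-}\,e^{\pi ja}=e^{\pi ia}\,\psi_{-}$, the two exponential factors combine into a single one on the left, giving
\begin{equation*}
W(g)\rho(-x,-y)P_{-}\varphi=\int\!\!\int g(u+x,v+y)\,e^{\pi i(u\cdot y-x\cdot v)}\,\pi(u,v)\varphi_{-}\,du\,dv.
\end{equation*}

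Next I would take the trace against an orthonormal basis $\{e_{k}\}$ of $L^{2}_{-}$, extended by a basis of $L^{2}_{+}$ on which $P_{-}$ vanishes. Since the constant quaternion $g(u+x,v+y)e^{\pi i(u\cdot y-x\cdot v)}$ is independent of the inner-product variable $\xi$, it pulls outside, and Fubini gives
\begin{equation*}
\text{tr}\bigl(W(g)\rho(-x,-y)P_{-}\bigr)=\int\!\!\int g(u+x,v+y)\,e^{\pi i(u\cdot y-x\cdot v)}\,\text{tr}_{L^{2}_{-}}\!\bigl(\pi(u,v)\bigr)\,du\,dv.
\end{equation*}
Identifying $L^{2}_{-}$ with $L^{2}(\mathbb{R}^{2},\mathbb{C})$ via $\tilde{\psi}\mapsto\tilde{\psi}(1-k)/2$, the operator $\pi(u,v)$ is the standard complex Euclidean Weyl representation, whose formal trace equals $\delta(u)\delta(v)$ distributionally (a consequence of the orthogonality/Plancherel relation (\ref{exp705}) paired with the classical inversion (\ref{exp703})). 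Since the exponential factor equals $1$ at $(u,v)=(0,0)$, the integral collapses to $g(x,y)$.

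The main obstacle is making this distributional step rigorous: the unitary $\pi(u,v)$ is not trace class for $(u,v)\neq(0,0)$, so the identity $\text{tr}(\pi(u,v))=\delta(u)\delta(v)$ only makes sense in the sense of distributions. I would handle this by first proving the formula for $g\in\mathcal{S}(\mathbb{R}^{4},\mathbb{Q})$, where $W(g)$ admits an explicit integral kernel analogous to the one in Theorem \ref{th504} and the formal trace $\int K(u,u)\,du$ can be evaluated by a direct Fourier inversion, and then extending to arbitrary $g\in L^{1}$ by density together with the bound $\|W(g)\|_{\mathrm{op}}\le\|g\|_{1}$, immediate from the unitarity of $\rho$. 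A secondary bookkeeping issue is to track the quaternion non-commutativity carefully when pulling constants past inner products and across the basis sum.
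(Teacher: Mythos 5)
Your algebraic reductions are sound: the use of (\ref{exp709}), the change of variables, and the transfer of the right-hand $e^{\pi j(\cdot)}$ factor to the left via (\ref{exp520}) on $L^2_-$ are all correct, and they do reduce the problem to evaluating $\int\!\!\int g(u+x,v+y)e^{\pi i(u\cdot y-x\cdot v)}\,\mathrm{tr}_{L^2_-}(\pi(u,v))\,du\,dv$. But this is where your argument has a genuine gap, in two places. First, the interchange of the basis sum with the $du\,dv$ integral is not a legitimate application of Fubini: $\pi(u,v)$ is unitary, hence not trace class for any $(u,v)$, so $\sum_k|\langle\pi(u,v)e_k,e_k\rangle|$ diverges and the "trace equals $\delta(u)\delta(v)$" step is purely formal. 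Second, the repair you propose does not close the gap: $L^1$-convergence $\sigma_k\to g$ gives only $\|W(\sigma_k)-W(g)\|_{\mathrm{op}}\to 0$, and the trace is \emph{not} continuous with respect to the operator norm (one needs trace-norm convergence of $W(\sigma_k)\rho(-x,-y)P_-$, which you have not supplied). Indeed, for general $g\in L^1$ the operator $W(g)\rho(-x,-y)P_-$ need not even be trace class, so making the statement precise requires either an extra hypothesis or a regularized interpretation of the trace; a Lebesgue-point argument would also be needed to get the pointwise value $g(x,y)$ a.e. from a distribution concentrated at $(u,v)=(0,0)$.

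It is worth noting how the paper avoids all of this. Rather than recomputing the trace from scratch, it writes $g=h_1+jh_2$ with $h_1,h_2$ complex, observes via (\ref{exp702}) that on $L^2_-$ the operator $\rho$ coincides with the classical Schr\"odinger representation $\pi$, chooses the basis $e_l=\tilde e_l\frac{1-k}{\sqrt2}$ of $L^2_-$ so that the quaternionic trace collapses to two classical traces, and then simply invokes the known Euclidean inversion formula (\ref{exp703}) from \cite{T1}. Your computation is, in effect, an attempt to re-derive (\ref{exp703}) by a stationary-phase/delta-function heuristic; the content you are missing is precisely the analytic justification of that classical formula. Either cite it as the paper does, or, if you want a self-contained argument, carry out your Schwartz-class computation with the explicit kernel (as in Theorem \ref{th504}) and verify the trace there directly by Fourier inversion, and then be explicit about which class of $g$ the final statement actually covers and in what sense the limit is taken.
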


\begin{proof}
Consider an orthonormal basis $\{\tilde{e}_l:l\in\mathsf{\Lambda}\}$ of $L^2(\mathbb R^2).$ Then
$\mathsf{B}=\{e_l=\tilde{e}_l(\frac{1-k}{\sqrt{2}}):l\in\mathsf{\Lambda}\}$ is an orthonormal basis
of $L_{-}^2(\mathbb R^2,\mathbb Q).$ Extend $\mathsf{B}$ to an orthonormal basis $\{e_l:l\in \mathbf{\Lambda}\}$
of $L^2(\mathbb R^2,\mathbb Q),$ where $\mathsf{\Lambda}\subset \mathbf{\Lambda}.$ Now,
\begin{align*}
\text{tr}\left(W(g)\rho(-x,-y)P_{-}\right)
&=\sum_{l\in \mathbf{\Lambda}}\left\langle W(g)\rho(-x,-y)P_{-} e_l,e_l \right\rangle
=\sum_{l\in \mathsf{\Lambda}}\left\langle W(g)\rho(-x,-y) e_l,e_l \right\rangle \\
&=\sum_{l\in \mathsf{\Lambda}}\left(\left\langle W(h_1)\rho(-x,-y) e_l,e_l \right\rangle
+j\left\langle W(h_2)\rho(-x,-y) e_l,e_l \right\rangle\right),
\end{align*}
where $g=h_1+jh_2$ and $h_1,h_2$ are complex valued functions. In view of (\ref{exp701})
and (\ref{exp702}), we have
\begin{align*}
\text{tr}\left(W(g)\rho(-x,-y)P_{-}\right)
&=\sum_{l\in \mathsf{\Lambda}}\left(\left\langle \mathcal{W}(h_1)\pi(-x,-y) e_l,e_l \right\rangle
+j\left\langle \mathcal{W}(h_2)\pi(-x,-y) e_l,e_l \right\rangle\right) \\
&=\sum_{l\in \mathsf{\Lambda}}\left(\left\langle \mathcal{W}(h_1)\pi(-x,-y) \tilde{e}_l,\tilde{e}_l \right\rangle
+j\left\langle \mathcal{W}(h_2)\pi(-x,-y) \tilde{e}_l,\tilde{e}_l \right\rangle\right),
\end{align*}
where the last equality holds as $\left(\frac{1-k}{\sqrt{2}}\right)\left(\frac{1+k}{\sqrt{2}}\right)=1.$ Hence,
\begin{align*}
\text{tr}\left(W(g)\rho(-x,-y)P_{-}\right)=\text{tr}\left(\mathcal{W}(h_1)\pi(-x,-y)\right)
+j\text{ tr}\left(\mathcal{W}(h_2)\pi(-x,-y)\right).
\end{align*}
Thus, by (\ref{exp703}) we get
\begin{align*}
\text{tr}\left(W(g)\rho(-x,-y)P_{-}\right)=h_1(x,y)+jh_2(x,y)=g(x,y).
\end{align*}
\end{proof}
Let $g\in L^2(\mathbb R^4,\mathbb Q)$ and write $g=h_1+jh_2$ for complex valued functions $h_1$ and $h_2.$
If we go through the proof of Theorem \ref{th701}, then we can have
\begin{equation}\label{exp712}
\|W(g)P_{-}\|_{HS}^2=\|\mathcal{W}(h_1)\|_{HS}^2+\|\mathcal{W}(h_2)\|_{HS}^2=\|g\|_2^2,
\end{equation}
where the last equality follows from the Plancherel formula (\ref{exp711}).

Consider the set $A$ in $\mathbb{R}^4$ of the form
\begin{equation}\label{exp714}
A=A^{(1,3)}\times A^{(2,4)},
\end{equation}
where
$A^{(1,3)}=\{(x_1,y_1)\in \mathbb R^2:(x_1,x_2,y_1,y_2)\in A\}$ and
$A^{(2,4)}=\{(x_2,y_2)\in \mathbb R^2:(x_1,x_2,y_1,y_2)\in A\}$ are the
projection of $A$ in $X_1 Y_1$-plane and $X_2 Y_2$-plane, respectively.
Then the following is the main result of the section.
\begin{theorem}\label{th702}
Let $g\in L^1(\mathbb R^4,\mathbb Q)$ and $\{(x,y)\in \mathbb{R}^4:g(x,y)\neq 0\} \subseteq A,$
where $A=A^{(1,3)}\times A^{(2,4)}$ is defined as in (\ref{exp714}). Suppose $A^{(1,3)}$ and $A^{(2,4)}$ have finite $2$-dimensional Lebesgue measure.
If $W(g)$ has finite rank, then $g=0.$
\end{theorem}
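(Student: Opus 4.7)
The strategy is to reduce the quaternionic problem to the classical Benedicks--Amrein--Berthier theorem for the Heisenberg Weyl transform (as in \cite{NR, V, GS}) by decomposing $g$ into complex components. Write $g = h_1 + j h_2$ with
\[
h_1 = g_0 + i g_1, \qquad h_2 = g_2 - i g_3 \in L^1(\mathbb{R}^4, \mathbb{C}).
\]
Since $\mathrm{supp}(g) \subseteq A^{(1,3)} \times A^{(2,4)}$ and both projections have finite planar measure, Fubini's theorem gives $|A| = |A^{(1,3)}| \cdot |A^{(2,4)}| < \infty$, so both $h_1$ and $h_2$ belong to $L^1(\mathbb{R}^4)$ and are supported on a set of finite Lebesgue measure in $\mathbb{R}^4$.

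Next I would identify the restriction of $W(g)$ to $L^2_{-}(\mathbb{R}^2, \mathbb{Q})$ with a pair of classical Heisenberg Weyl transforms. Using the natural isometry $\psi \mapsto \psi \cdot \tfrac{1-k}{\sqrt{2}}$ together with (\ref{exp702}), which asserts that $\rho(x,y)$ acts as $\pi(x,y)$ on $L^2_{-}$, one writes
\[
W(g)\varphi_{-} = \int_{\mathbb R^{4}} h_1(x,y)\, \pi(x,y)\varphi_{-} \, dx\,dy + \int_{\mathbb R^{4}} j h_2(x,y)\, \pi(x,y)\varphi_{-} \, dx\,dy.
\]
Projecting the output onto $L^2_{-}$ and $L^2_{+}$ separately, and using the relation $jz = \bar z j$ valid for complex $z$ together with the commutator rules (\ref{exp520}), the first integral lies in $L^2_{-}$ and equals $\mathcal{W}(h_1)\psi \cdot \tfrac{1-k}{\sqrt 2}$, while the second lies in $L^2_{+}$ and, after a change of variable $x\mapsto -x$, equals $\mathcal{W}(\tilde h_2)\bar\psi$ multiplied by a fixed nonzero quaternion factor, where $\tilde h_2(x,y) = \overline{h_2(-x, y)}$ and $\mathcal{W}$ denotes the classical Heisenberg Weyl transform on $\mathbb{R}^2$.

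Since $W(g)$ has finite rank, so does $W(g) P_{-}$, and hence each of its two $L^2_{\pm}$-components. It follows that the Heisenberg Weyl transforms $\mathcal{W}(h_1)$ and $\mathcal{W}(\tilde h_2)$ are both of finite rank on $L^2(\mathbb{R}^2, \mathbb{C})$. Because $h_1$ and $\tilde h_2$ are in $L^1(\mathbb{R}^4)$ with support of finite Lebesgue measure, the classical Benedicks--Amrein--Berthier theorem for the Heisenberg Weyl transform (\cite{NR, V, GS}) forces $h_1 = \tilde h_2 = 0$, whence $h_2 = 0$, and therefore $g = h_1 + j h_2 = 0$.

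The principal obstacle is the decomposition step: the non-commutativity of $\mathbb{Q}$ requires careful tracking of the $j$ factors through the operator $W(g)$, so that the image of $W(g) P_{-}$ splits cleanly into its $L^2_{\pm}$-parts as two independent complex-linear problems. Although the identities (\ref{exp520}) and $j z = \bar z j$ handle this bookkeeping cleanly, one must also verify that $\pi(-x,y)\bar\psi = \overline{\pi(x,y)\psi}$ in order to recognize the $L^2_{+}$-component as a genuine Heisenberg Weyl transform of $\tilde h_2$; once this splitting is established, the BAB reduction is immediate.
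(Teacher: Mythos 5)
Your proposal is correct, but it takes a genuinely different route from the paper. You reduce the quaternionic statement to the known Benedicks--Amrein--Berthier theorem for the classical Heisenberg Weyl transform by writing $g=h_1+jh_2$ and observing that, under the identification $L^2_-(\mathbb R^2,\mathbb Q)\cong L^2(\mathbb R^2,\mathbb C)$ given by $\tilde\varphi\mapsto\tilde\varphi\,\tfrac{1-k}{\sqrt2}$, the operator $P_-W(g)P_-$ becomes $\mathcal W(h_1)$ while $P_+W(g)P_-$ becomes the conjugate-linear map $\tilde\varphi\mapsto -i\,\mathcal W(\tilde h_2)\overline{\tilde\varphi}$ (up to the fixed unit $\tfrac{1+k}{2}$), with $\tilde h_2(x,y)=\overline{h_2(-x,y)}$; the identities you need, namely $jz=\bar z j$, $j\tfrac{1-k}{2}=-i\tfrac{1+k}{2}$ and $\overline{\pi(x,y)\psi}=\pi(-x,y)\bar\psi$, all check out, conjugate-linearity is harmless because the range is still the complex range of $\mathcal W(\tilde h_2)$, and $m_4(A)=m_2(A^{(1,3)})\,m_2(A^{(2,4)})<\infty$ by Fubini. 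The paper instead reruns the whole Amrein--Berthier machinery in the quaternionic setting: it introduces the projections $E_A$ and $F_S$, computes the kernel of $E_AF_S$ (Lemma \ref{lemma701}), proves the Hilbert--Schmidt bound $\Vert E_AF_S\Vert_{HS}^2\le m(A)N^2$ (Lemma \ref{lemma702}), and carries out the translation and dimension-counting argument of Proposition \ref{prop701}, in which translations are confined to the $(x_1,y_1)$-plane precisely because only there is the modulation factor a power of $i$ that can be kept on the left of $g_0$; this is where the product hypothesis $A=A^{(1,3)}\times A^{(2,4)}$ is actually used. Your approach buys brevity and a formally stronger theorem (it needs only that the support of $g$ have finite four-dimensional Lebesgue measure, not a product structure), at the cost of invoking the results of \cite{V,GS} as a black box; the paper's approach is self-contained apart from the covering Lemma \ref{lemma703}, and its intermediate lemmas have independent interest. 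In a full write-up you should state explicitly that finite (real) rank of the quaternion-linear operator $W(g)$ implies finite complex rank of each of $\mathcal W(h_1)$ and $\mathcal W(\tilde h_2)$ --- immediate, but it is the hinge of the reduction.
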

Let $g\in L^2(\mathbb R^4,\mathbb Q)$ and $W(g)$ be a finite rank operator. Then there exists an orthonormal
basis $\{\varphi_1,\varphi_2,\ldots\}$ of $L^2(\mathbb R^2,\mathbb Q)$ such that $\mathcal{R}(W(g))=S,$
where $S=\text{span}\{\varphi_1,\ldots,\varphi_N \}$ and $\mathcal R$ stands for the range. Define an orthogonal
projection $P_S$ of $L^2(\mathbb R^2,\mathbb Q)$ onto $S.$ Let $A$ be a measurable subset of $\mathbb{R}^4.$
Define a pair of orthogonal projections $E_A$ and $F_S$ of $L^2(\mathbb R^4,\mathbb Q)$ by
\begin{equation}\label{exp704}
E_A g=\chi_A g \qquad \text{ and } \qquad W(F_S g)=P_S W(g),
\end{equation}
where $\chi_A$ denotes the characteristic function of $A.$ Then
$\mathcal{R}(E_A)=\{g\in L^2(\mathbb R^4,\mathbb Q): g= \chi_A g \}$ and
$\mathcal{R}(F_S)=\{g\in L^2(\mathbb R^4,\mathbb Q): \mathcal{R}(W(g))\subseteq S\}.$

\smallskip

Now we compute the Hilbert-Schmidt norm of $E_AF_S,$ when $m(A)$ is finite.
\begin{lemma}\label{lemma701}
The operator $E_AF_S$ is an integral operator with kernel
$K(x,y,u,v)=\chi_A(x,y)\text{tr}\left(P_S \rho(u,v)\rho(-x,-y)P_{-}\right),$
where $x,y,u,v\in\mathbb R^2.$
\end{lemma}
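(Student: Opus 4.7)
The plan is to apply the inversion formula of Theorem~\ref{th701} to $F_S g$, and then expand the Weyl transform as an integral and exchange it with the trace to read off the kernel.

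First I would use the defining relation $W(F_S g) = P_S W(g)$ from~(\ref{exp704}). Since $P_S$ has finite rank, $P_S W(g)$ is trace class, so the inversion formula of Theorem~\ref{th701} applies to $F_S g$ (for $g$ taken from a suitable dense subclass where $F_S g \in L^1 \cap L^2$). This gives
\[(F_S g)(x,y) = \mathrm{tr}\bigl(W(F_S g)\,\rho(-x,-y) P_{-}\bigr) = \mathrm{tr}\bigl(P_S W(g)\,\rho(-x,-y) P_{-}\bigr),\]
and multiplying by $\chi_A(x,y)$ yields
\[(E_A F_S g)(x,y) = \chi_A(x,y)\,\mathrm{tr}\bigl(P_S W(g)\,\rho(-x,-y) P_{-}\bigr).\]

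Next I would substitute $W(g) = \int_{\mathbb R^4} g(u,v)\rho(u,v)\,du\,dv$ and interchange the $(u,v)$-integral with the trace. On a dense subclass of $g$ (say, $g \in \mathcal{S}(\mathbb{R}^4,\mathbb{Q})$) this is justified via Fubini, because $P_S$ is finite rank, all operators involved are bounded, and the resulting integrand is absolutely Bochner-integrable. The computation then produces
\[(E_A F_S g)(x,y) = \int_{\mathbb R^4} \chi_A(x,y)\,\mathrm{tr}\bigl(P_S \rho(u,v)\rho(-x,-y) P_{-}\bigr)\,g(u,v)\,du\,dv,\]
identifying $K(x,y,u,v) = \chi_A(x,y)\,\mathrm{tr}(P_S \rho(u,v)\rho(-x,-y) P_{-})$. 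The extension to general $g \in L^2(\mathbb R^4,\mathbb Q)$ follows by a standard density argument.

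The main obstacle is the trace--integral exchange in the quaternion setting, since pulling the quaternion scalar $g(u,v)$ across the trace is delicate because of non-commutativity of $\mathbb Q$. I would resolve this exactly as in the proof of Theorem~\ref{th701}: decompose $g = h_1 + j h_2$ with complex-valued $h_1, h_2$, so that on the subspace $L^2_{-}(\mathbb R^2,\mathbb Q)$ the operator $\rho$ reduces to the classical $\pi$ via~(\ref{exp702}); apply the familiar complex Weyl-transform trace identities to each complex component, where pulling the complex scalar across inner products is standard; and finally recombine using $\left(\tfrac{1-k}{\sqrt{2}}\right)\left(\tfrac{1+k}{\sqrt{2}}\right)=1$, as in the last step of the proof of Theorem~\ref{th701}, to assemble the quaternion-valued kernel $K(x,y,u,v)$.
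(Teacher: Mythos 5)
Your proposal follows the paper's proof essentially verbatim: apply the inversion formula of Theorem~\ref{th701} to $F_S g$ via $W(F_S g)=P_S W(g)$, expand $W(g)$ as an integral of $g(u,v)\rho(u,v)$, and interchange the integral with the trace to read off the kernel. The only difference is one of care, not of route: the paper performs the trace--integral interchange and the passage of the quaternion scalar $g(u,v)$ through the trace without comment, while you flag these points and handle them via density and the $g=h_1+jh_2$ decomposition used in Theorem~\ref{th701}.
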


\begin{proof}
For $g\in L^2(\mathbb R^4,\mathbb Q)$, we have $W(F_S g)=P_S W(g)$. Then by Theorem \ref{th701}, inversion formula
for the Weyl transform, we have
\begin{align*}
(F_Sg)(x,y)&=\text{tr}\left(W(F_Sg)\rho(-x,-y)P_{-}\right)
=\text{tr}\left(P_SW(g)\rho(-x,-y)P_{-}\right) \\
&=\int_{\mathbb{R}^4} g(u,v) \text{tr}\left(P_S \rho(u,v)\rho(-x,-y)P_{-}\right)du dv.
\end{align*}
Hence, we can write
\begin{align*}
(E_AF_Sg)(x,y)&=\chi_A(x,y)(F_Sg)(x,y) \\
&=\chi_A(x,y)\int_{\mathbb{R}^4} g(u,v) \text{tr}\left(P_S \rho(u,v)\rho(-x,-y)P_{-}\right)du dv \\
&=\int_{\mathbb{R}^4} g(u,v) K(x,y,u,v)dudv,
\end{align*}
where $K(x,y,u,v)=\chi_A(x,y)\text{tr}\left(P_S \rho(u,v)\rho(-x,-y)P_{-}\right).$
\end{proof}

\begin{lemma}\label{lemma702}
$E_AF_S$ is Hilbert-Schmidt satisfying $\|E_AF_S\|_{HS}^2\leq m(A)N^2,$
where $N=\dim(S).$
\end{lemma}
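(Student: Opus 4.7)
My plan is to express $\|E_A F_S\|_{HS}^2$ as the squared $L^2$-norm of the kernel supplied by Lemma~\ref{lemma701}, and then bound the integrand by combining a Cauchy--Schwarz applied to the trace, the Weyl-type commutation relation read off from (\ref{exp702}) and (\ref{exp709}), and a quaternion version of the orthogonality relation (\ref{exp705}).

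First, using Lemma~\ref{lemma701} and the standard formula $\|T\|_{HS}^{2}=\int\!\!\int|K|^{2}$ for an integral operator,
\[
\|E_{A}F_{S}\|_{HS}^{2}=\int_{A}\int_{\mathbb R^{4}}\bigl|\text{tr}(P_{S}\rho(u,v)\rho(-x,-y)P_{-})\bigr|^{2}\,du\,dv\,dx\,dy.
\]
Completing $\{\varphi_{1},\dots,\varphi_{N}\}$ to an orthonormal basis of $L^{2}(\mathbb R^{2},\mathbb Q)$ and using self-adjointness of the rank-$N$ projection $P_{S}$, the trace collapses to $\sum_{n=1}^{N}\langle\rho(u,v)\rho(-x,-y)P_{-}\varphi_{n},\varphi_{n}\rangle$. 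The elementary quaternion inequality $\bigl|\sum_{n=1}^{N}q_{n}\bigr|^{2}\le N\sum_{n=1}^{N}|q_{n}|^{2}$ then yields
\[
|K(x,y,u,v)|^{2}\le N\,\chi_{A}(x,y)\sum_{n=1}^{N}\bigl|\langle\rho(u,v)\rho(-x,-y)P_{-}\varphi_{n},\varphi_{n}\rangle\bigr|^{2}.
\]

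Since $P_{-}\varphi_{n}\in L_{-}^{2}(\mathbb R^{2},\mathbb Q)$, identity (\ref{exp702}) rewrites the inner vector as $\pi(u,v)\pi(-x,-y)P_{-}\varphi_{n}$, and a direct computation gives the standard Weyl relation $\pi(u,v)\pi(-x,-y)=e^{\pi i(u\cdot y-x\cdot v)}\pi(u-x,v-y)$. Because the prefactor is a unit-modulus complex scalar and $|\cdot|$ is multiplicative on $\mathbb Q$, the modulus of the inner product is unchanged; the change of variables $(a,b)=(u-x,v-y)$ with Jacobian $1$ then decouples the inner integral from $(x,y)$:
\[
\int_{\mathbb R^{4}}\bigl|\langle\rho(u,v)\rho(-x,-y)P_{-}\varphi_{n},\varphi_{n}\rangle\bigr|^{2}du\,dv=\int_{\mathbb R^{4}}\bigl|\langle\pi(a,b)P_{-}\varphi_{n},\varphi_{n}\rangle\bigr|^{2}da\,db.
\]

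The remaining and trickiest step is a quaternion Moyal estimate $\int_{\mathbb R^{4}}|\langle\pi(a,b)P_{-}\varphi_{n},\varphi_{n}\rangle|^{2}\,da\,db\le\|P_{-}\varphi_{n}\|^{2}\|\varphi_{n}\|^{2}\le 1$. Writing $\varphi_{n}=f_{n}\tfrac{1-k}{\sqrt 2}+g_{n}\tfrac{1+k}{\sqrt 2}$ with complex-valued $f_{n},g_{n}$ and using $\tfrac{1-k}{\sqrt 2}\cdot\tfrac{1+k}{\sqrt 2}=1$, $\bigl(\tfrac{1-k}{\sqrt 2}\bigr)^{2}=-k$, together with $f_{n}k=k\bar f_{n}$ and $e^{i\alpha}k=ke^{-i\alpha}$, one finds
\[
\langle\pi(a,b)P_{-}\varphi_{n},\varphi_{n}\rangle=\langle\pi(a,b)f_{n},f_{n}\rangle-k\,\overline{\langle\pi(a,b)f_{n},\bar g_{n}\rangle},
\]
the two summands lying in the orthogonal real planes $\text{span}\{1,i\}$ and $\text{span}\{j,k\}$ of $\mathbb Q$. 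Moduli therefore add in quadrature, and applying (\ref{exp705}) to each complex factor gives $\|f_{n}\|^{4}+\|f_{n}\|^{2}\|g_{n}\|^{2}=\|P_{-}\varphi_{n}\|^{2}\|\varphi_{n}\|^{2}\le 1$. Putting the pieces together,
\[
\|E_{A}F_{S}\|_{HS}^{2}\le N\int_{A}\sum_{n=1}^{N}\|P_{-}\varphi_{n}\|^{2}\,dx\,dy\le N^{2}m(A).
\]
The main obstacle I expect is this last step: no off-the-shelf Moyal identity is available in the quaternion setting, and the noncommutativity of $k$ with both the complex coefficients $f_{n}$ and the exponential inside $\pi$ is precisely what forces the Pythagorean splitting above, so careful bookkeeping of the two orthogonal quaternion directions is needed before (\ref{exp705}) can be invoked componentwise.
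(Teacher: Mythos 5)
Your proof is correct and follows essentially the same route as the paper: the kernel from Lemma \ref{lemma701}, the elementary bound $\bigl|\sum_{n}q_{n}\bigr|^{2}\le N\sum_{n}|q_{n}|^{2}$, reduction to $\pi$ via (\ref{exp702}), and the orthogonality relation (\ref{exp705}). The only place you go beyond the paper is in justifying that (\ref{exp705}) still applies with quaternion-valued vectors when the first argument lies in $L^{2}_{-}$ — the paper simply sets $\psi_j^{x,y}=\pi(-x,-y)P_{-}\varphi_j$, notes $\|\psi_j^{x,y}\|\le 1$, and invokes (\ref{exp705}) termwise — and your splitting of the inner product into the orthogonal planes $\mathrm{span}\{1,i\}$ and $\mathrm{span}\{j,k\}$ is a correct (and welcome) verification of that implicitly assumed step.
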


\begin{proof}
It is proved in Lemma \ref{lemma701} that $E_AF_S$ is an integral operator with kernel $K(x,y,u,v).$ Therefore,
\begin{align}\label{exp706}
\|E_AF_S\|_{HS}^2&= \int_{\mathbb{R}^4}\int_{\mathbb{R}^4}|K(x,y,u,v)|^2 du dv dx dy \nonumber \\
&=\int_{\mathbb{R}^4}|\chi_A(x,y)|^2 \left(\int_{\mathbb{R}^4}
|\text{tr}\left(P_S \rho(u,v)\rho(-x,-y)P_{-}\right)|^2 du dv\right)dx dy \nonumber \\
&=\int_{\mathbb{R}^4}\chi_A(x,y) \left(\int_{\mathbb{R}^4}\Big|
\sum\limits_{j=1}^N \langle \rho(u,v)\rho(-x,-y)P_{-} \varphi_j,\varphi_j\rangle \Big|^2 du dv\right)dx dy.
\end{align}
In view of (\ref{exp702}), the above inner integral reduces to
\begin{align*}
\int_{\mathbb{R}^4}\Big|\sum\limits_{j=1}^N \langle \pi(u,v)\pi(-x,-y)P_{-} \varphi_j,\varphi_j\rangle \Big|^2 du dv
=\int_{\mathbb{R}^4}\Big|\sum\limits_{j=1}^N \langle \pi(u,v)\psi_j^{x,y},\varphi_j\rangle \Big|^2 du dv,
\end{align*}
where $\psi_j^{x,y}=\pi(-x,-y)P_{-} \varphi_j$ and $\|\psi_j^{x,y}\|\leq \|\varphi_j\|=1$
for all $x,y\in\mathbb R^2.$ Then by using (\ref{exp705}), we get
\begin{align*}
\int_{\mathbb{R}^4}\Big|\sum\limits_{j=1}^N \langle \pi(u,v)\psi_j^{x,y},\varphi_j\rangle \Big|^2 du dv
\leq N\sum\limits_{j=1}^N \int_{\mathbb{R}^4}\Big|\langle \pi(u,v)\psi_j^{x,y},\varphi_j\rangle \Big|^2 du dv
\leq N^2.
\end{align*}
Hence from (\ref{exp706}), we have $\|E_AF_S\|_{HS}^2\leq m(A)N^2.$
\end{proof}

We need the following result from \cite{AB} that describes an interesting
property of measurable sets having finite measure. Denote $u B=\{x\in \mathbb{R}^n:x-u\in B\}.$

\begin{lemma}{\em \cite{AB}}\label{lemma703}
Let $B$ be a measurable set in $\mathbb{R}^n$ with $0<m(B)<\infty.$ If $B_0$
is a measurable subset of $B$ with $m(B_0)>0,$ then for $\epsilon>0,$ there exists
$u\in \mathbb{R}^n$ such that \[m(B)<m(B\cup u B_0)<m(B)+\epsilon.\]
\end{lemma}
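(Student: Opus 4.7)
The plan is to study the continuous function $\varphi: \mathbb{R}^n \to [0,m(B_0)]$ defined by
\[
\varphi(u) = m\bigl((u+B_0)\setminus B\bigr),
\]
noting that $B \cup uB_0$ has measure $m(B) + \varphi(u)$ (since $uB_0 = u+B_0$ is disjoint from $B$ precisely on $(u+B_0)\setminus B$). Hence producing $u$ with $0 < \varphi(u) < \epsilon$ is equivalent to the claim.

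First I would write $\varphi(u) = m(B_0) - \psi(u)$ where
\[
\psi(u) = m\bigl((u+B_0)\cap B\bigr) = \int_{\mathbb R^n}\chi_B(x)\chi_{B_0}(x-u)\,dx = (\chi_B * \widetilde{\chi}_{B_0})(u),
\]
with $\widetilde{\chi}_{B_0}(x) = \chi_{B_0}(-x)$. Since $\chi_B$ and $\chi_{B_0}$ both lie in $L^1 \cap L^\infty$, a standard argument (translation continuity in $L^1$) shows that $\psi$ is uniformly continuous on $\mathbb R^n$, and therefore so is $\varphi$.

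Next I need the two boundary values: $\varphi(0) = m(B_0\setminus B) = 0$ because $B_0\subseteq B$, and $\varphi(u)\to m(B_0)$ as $|u|\to\infty$. For the limit at infinity, the trick is to approximate by compact sets: given $\eta>0$, choose $R$ so large that $m(B_0\setminus B(0,R)) < \eta$ and $m(B\setminus B(0,R)) < \eta$; then for $|u| > 2R$, any point of $(u+B_0)\cap B(0,R)$ has preimage in $B_0\setminus B(0,R)$, forcing $\psi(u) < 2\eta$. Since $m(B_0) > 0$, this yields $\varphi(u) > \epsilon$ for all sufficiently large $|u|$.

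Finally, fix $u_\infty$ with $\varphi(u_\infty) > \epsilon$ and consider the continuous function $t \mapsto \varphi(tu_\infty)$ on $[0,1]$, which runs from $0$ to a value exceeding $\epsilon$. By the intermediate value theorem there exists $t \in (0,1)$ with $\varphi(tu_\infty) = \epsilon/2$; taking $u = tu_\infty$ gives $0 < \varphi(u) < \epsilon$, hence $m(B) < m(B\cup uB_0) < m(B)+\epsilon$, as required. The only mildly delicate step is the vanishing of $\psi$ at infinity — the continuity of $\varphi$ and the IVT argument are routine — and this is handled cleanly by the compact truncation above.
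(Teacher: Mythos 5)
The paper does not prove this lemma; it is quoted verbatim from Amrein--Berthier \cite{AB}, so there is no internal proof to compare against. Your argument is correct and is in substance the classical one from that source: the function $u\mapsto m(B\cup uB_0)=m(B)+m(B_0)-(\chi_B*\widetilde{\chi}_{B_0})(u)$ is continuous (translation continuity in $L^1$), equals $m(B)$ at $u=0$ because $B_0\subseteq B$, tends to $m(B)+m(B_0)$ at infinity, and the intermediate value theorem finishes the job. Your compact-truncation argument for $\psi(u)\to 0$ is also the standard one and is carried out correctly. The only point to tidy up is the sentence ``this yields $\varphi(u)>\epsilon$ for all sufficiently large $|u|$'': since $\varphi$ is bounded above by $m(B_0)$, this is false when $\epsilon\geq m(B_0)$. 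It costs one line to repair --- either observe that the conclusion for a smaller $\epsilon$ implies it for a larger one, so one may assume $\epsilon<m(B_0)$ from the outset, or aim the intermediate value theorem at the level $\tfrac{1}{2}\min(\epsilon,m(B_0))$ instead of $\epsilon/2$. With that normalization stated, the proof is complete.
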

Let $E$ and $F$ are orthogonal projections on a Hilbert space $\mathcal H.$
Denote $E\cap F$ be the orthogonal projection of $\mathcal H$ onto
$\mathcal{R}(E)\cap \mathcal{R}(F).$ Then, we have the relation
\begin{align}\label{exp707}
\Vert E\cap F \Vert_{HS}^2=\dim\mathcal{R}( E\cap F)\leq \Vert EF \Vert_{HS}^2.
\end{align}

\begin{proposition}\label{prop701}
Let $A=A^{(1,3)}\times A^{(2,4)}$ be defined as in (\ref{exp714}).
Suppose $A^{(1,3)}$ and $A^{(2,4)}$ have finite $2$-dimensional Lebesgue measure.
Then the projection $E_A\cap F_S=0.$
\end{proposition}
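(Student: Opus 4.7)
The plan is a proof by contradiction based on Lemma~\ref{lemma702} and Lemma~\ref{lemma703}, together with the covariance relation (\ref{exp709}) of $\rho$ under translations. Suppose, to derive a contradiction, that $\mathcal{R}(E_A \cap F_S)$ has positive dimension, and fix a nonzero $g_0$ in it; in particular $g_0$ is supported in $A = A^{(1,3)} \times A^{(2,4)}$ and $W(g_0)$ has range contained in the subspace $S$ of dimension $N$.

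First I would establish a translation covariance for $W$. For $w \in \mathbb{R}^4$ let $T_w g(z) := g(z - w)$. A change of variables in the defining integral $W(T_w g_0)\varphi = \int g_0(z)\rho(z + w)\varphi \, dz$, combined with the cocycle identity (\ref{exp709}), shows that $W(T_w g_0)$ factors as $\rho(w)$ composed with the Weyl transform of a unimodularly modulated (still $A$-supported) version of $g_0$. Consequently, $W(T_w g_0)$ has range inside a subspace $S_w$ of the same dimension $N$, and $T_w g_0 \in \mathcal{R}(E_{w + A} \cap F_{S_w})$.

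Next, I would choose a positive-measure subset $A_0 \subseteq A$ on which $g_0$ does not vanish, and apply Lemma~\ref{lemma703} inductively to the finite-measure set $A$. For any $\epsilon > 0$ and any $k \geq 1$ this produces translations $w_1, \dots, w_k \in \mathbb{R}^4$ such that the pieces $w_\ell + A_0$ are essentially pairwise disjoint and essentially disjoint from $A_0$, while
\begin{equation*}
m\Bigl(A \cup \bigcup_{\ell=1}^{k}(w_\ell + A)\Bigr) < m(A) + \epsilon.
\end{equation*}
Essential disjointness of the pieces $A_0, w_1 + A_0, \dots, w_k + A_0$ forces the functions $g_0, T_{w_1} g_0, \dots, T_{w_k} g_0$ to be linearly independent, so their span $V_k$ is a $(k+1)$-dimensional subspace of $\mathcal{R}(E_{\tilde A_k} \cap F_{\tilde S_k})$, where $\tilde A_k := A \cup \bigcup_\ell (w_\ell + A)$ and $\tilde S_k := S + \sum_\ell S_{w_\ell}$.

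Combining (\ref{exp707}) with Lemma~\ref{lemma702} then gives
\begin{equation*}
k + 1 \le \|E_{\tilde A_k} \cap F_{\tilde S_k}\|_{HS}^2 \le \|E_{\tilde A_k} F_{\tilde S_k}\|_{HS}^2 \le m(\tilde A_k)\,(\dim \tilde S_k)^2.
\end{equation*}
The main obstacle lies in arranging matters so that the right-hand side stays uniformly bounded as $k \to \infty$, since the naive estimate $\dim \tilde S_k \le (k+1)N$ is clearly too crude. To overcome this, I would exploit the product structure $A = A^{(1,3)} \times A^{(2,4)}$ by performing translations only within one $\mathbb{R}^2$-slot at a time, using the factorisation of $\rho(x,y)$ into a left complex-exponential factor in $(x_1, y_1)$ and a right $j$-exponential factor in $(x_2, y_2)$. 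A translation within a single slot leaves the range subspaces $S_{w_\ell}$ inside a common subspace of bounded dimension (depending only on $N$ and not on $k$), so that $m(\tilde A_k)(\dim \tilde S_k)^2$ stays uniformly bounded; this contradicts $k+1 \to \infty$ and forces $\mathcal{R}(E_A \cap F_S) = \{0\}$.
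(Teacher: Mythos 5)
Your overall strategy coincides with the paper's: argue by contradiction, use Lemma~\ref{lemma703} to adjoin translates of the essential support while keeping the measure increment small, produce translated copies of $g_0$ that are linearly independent by a support argument, and contradict the dimension bound coming from (\ref{exp707}) and Lemma~\ref{lemma702}. You also correctly sense that the translations must act in only one $\mathbb{R}^2$--slot so that the product structure $A^{(1,3)}\times A^{(2,4)}$ is preserved and the right $j$-exponential in (\ref{exp709}) is trivial. However, there is a genuine gap at the single most critical step: the control of the range subspace.

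You factor $W(T_w g_0)=\rho(w)\,W(\tilde g_0)$ with $\rho(w)$ on the \emph{left}, so the range of the translated operator is the moved subspace $S_w=\rho(w)\mathcal{R}(W(\tilde g_0))$, and you then assert that for single-slot translations all the $S_{w_\ell}$ sit inside a common subspace of dimension bounded independently of $k$. That assertion is unjustified and, as stated, false: even for $S$ spanned by a single Gaussian, the family $\rho(w)S$ as $w$ ranges over translations in one slot consists of time–frequency shifted Gaussians, whose span is infinite dimensional. With $\dim\tilde S_k$ growing like $(k+1)N$, the inequality $k+1\le m(\tilde A_k)(\dim\tilde S_k)^2$ yields no contradiction. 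The correct mechanism --- and the one the paper uses --- is to compose on the \emph{right}: define
\begin{equation*}
g_l(x,y)=g_0(x_1-u_l^{(1)},x_2,y_1-u_l^{(2)},y_2)\,e^{\pi i(y_1u_l^{(1)}-x_1u_l^{(2)})},
\end{equation*}
absorbing the left $i$-phase from (\ref{exp709}) into the symbol, so that $W(g_l)\varphi=W(g_0)\bigl(\rho(u_l^{(1)},0,u_l^{(2)},0)\varphi\bigr)$. Then $\mathcal{R}(W(g_l))\subseteq\mathcal{R}(W(g_0))\subseteq S$ for every $l$, the projection $F_S$ never changes, and the count becomes $\dim\mathcal{R}(E_{A_s}\cap F_S)\le m(A_s^{(1,3)})m(A^{(2,4)})N^2<s$ against $s+1$ independent functions. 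Until you replace your left factorization (and the accompanying claim about a common container for the $S_{w_\ell}$) by this right composition, the argument does not close. Two smaller imprecisions: Lemma~\ref{lemma703} does not give pairwise essential disjointness of the translates, only that each new translate adds positive measure; linear independence should instead be derived from the triangular structure $E_{A_m\setminus A_{m-1}}g_l=0$ for $l<m$ and $E_{A_m\setminus A_{m-1}}g_m\neq0$. Also, the set you enlarge should be $A\cup\bigcup_\ell(w_\ell+A_0)$ with $A_0$ the essential support of $g_0$ in the first slot crossed with $A^{(2,4)}$, not $\bigcup_\ell(w_\ell+A)$.
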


\begin{proof}
Assume towards contrary that there exists a non-zero function $g_0\in\mathcal{R}( E_A\cap F_S)$.
Then $\mathcal{R}(W(g_0))\subseteq S.$ Let
\begin{align}\label{exp708}
A_0^{(1,3)}=\{(x_1,y_1)\in A^{(1,3)}: & \, \exists \, I_{(x_1,y_1)}\subseteq A^{(2,4)} \text{ with } m(I_{(x_1,y_1)})>0 \\
&\text{ and } g_0(x_1,x_2,y_1,y_2)\neq 0 \, \forall \, (x_2,y_2)\in I_{(x_1,y_1)}\} \nonumber.
\end{align}
Clearly, $0<m(A_0^{(1,3)})<\infty$. Choose $s\in \mathbb{N}$ such that
$s>2\,m(A_0^{(1,3)})m(A^{(2,4)})N^2,$
where $N=\dim(S).$ Now, we construct an increasing sequence of measurable sets $\{A_l^{(1,3)}:l=1,\ldots,s\}$
containing $A_0^{(1,3)}.$ By Lemma \ref{lemma703}, for $\epsilon=\frac{1}{2m(A^{(2,4)})N^2}$, $B_0=A_0^{(1,3)}$
and $B=A_{l-1}^{(1,3)},$ there exists $u_l=(u_l^{(1)},u_l^{(2)})\in \mathbb{R}^2$ such that
\[m(A_{l-1}^{(1,3)})<m(A_{l-1}^{(1,3)}\cup u_lA_0^{(1,3)})<m(A_{l-1}^{(1,3)})+\frac{1}{2m(A^{(2,4)})N^2}.\]
Write $A_l^{(1,3)}=A_{l-1}^{(1,3)}\cup u_lA_0^{(1,3)}$ and $A_s=A_s^{(1,3)}\times A^{(2,4)}.$
In view of (\ref{exp707}) and Lemma \ref{lemma702}, we obtain
\begin{align}\label{exp715}
\dim\mathcal{R}( E_{A_s}\cap F_S) &\leq m(A_s)N^2=m(A_s^{(1,3)})m(A^{(2,4)})N^2 \nonumber \\
&\leq \left(m(A_0^{(1,3)})+\frac{s}{2m(A^{(2,4)})N^2}\right)m(A^{(2,4)})N^2<s.
\end{align}
Next, we shall produce $s+1$ linearly independent functions in $\mathcal{R}( E_{A_s}\cap F_S).$
Let $$g_l(x,y)=g_0(x_1-u_l^{(1)},x_2,y_1-u_l^{(2)},y_2)e^{\pi i(y_1u_l^{(1)}-x_1u_l^{(2)})}.$$
We shall show that $g_l\in \mathcal{R}(F_S)$ for each $l=1,\ldots,s.$ To do so, let
$\varphi \in L^2(\mathbb{R}^2,\mathbb{Q})$ and $j>N.$ Then
\begin{align*}
\langle W(g_l)\varphi,\varphi_j\rangle &=\int_{\mathbb{R}^4}
g_l(x_1,x_2,y_1,y_2) \langle \rho(x_1,x_2,y_1,y_2)\varphi,\varphi_j\rangle dx_1 dx_2 dy_1 dy_2 \\
&=\int_{\mathbb{R}^4} g_0(x_1-u_l^{(1)},x_2,y_1-u_l^{(2)},y_2)e^{\pi i(y_1u_l^{(1)}-x_1u_l^{(2)})}
\langle \rho(x,y)\varphi,\varphi_j\rangle dx dy \\
&=\int_{\mathbb{R}^4} g_0(x,y) e^{\pi i(y_1u_l^{(1)}-x_1u_l^{(2)})}
\langle \rho(x_1+u_l^{(1)},x_2,y_1+u_l^{(2)},y_2)\varphi,\varphi_j\rangle dx dy.
\end{align*}
From (\ref{exp709}) we have $$\rho(x_1,x_2,y_1,y_2)\rho(u_l^{(1)},0,u_l^{(2)},0)
=e^{\pi i(y_1u_l^{(1)}-x_1u_l^{(1)})}\rho(x_1+u_l^{(1)},x_2,y_1+u_l^{(2)},y_2).$$
Thus,
\begin{align*}
\langle W(g_l)\varphi,\varphi_j\rangle
&=\int_{\mathbb{R}^4} g_0(x,y)\langle \rho(x_1,x_2,y_1,y_2)\rho(u_l^{(1)},0,u_l^{(2)},0)\varphi,\varphi_j\rangle dx dy \\
&=\int_{\mathbb{R}^4} g_0(x,y)\langle \rho(x,y)\psi,\varphi_j\rangle dx dy
=\langle W(g_0)\psi,\varphi_j\rangle=0.
\end{align*}
Hence $\mathcal{R}(W(g_l))\subseteq S.$
Since, $A_m=(A_0^{(1,3)}\cup u_1A_0^{(1,3)}\cup\cdots\cup u_m A_0^{(1,3)})\times A^{(2,4)}$ and
$g_l=0$ on $(u_lA_0^{(1,3)}\times A^{(2,4)})^c$, we have $E_{A_m}g_l=g_l$ for $l=1,\ldots,m$.
Furthermore, $E_{A_m \setminus A_{m-1}}g_l=0$ for $l=1,\ldots,m-1$ and in view of (\ref{exp708}),
$E_{A_m \setminus A_{m-1}}g_m\neq 0$. Therefore $g_m$ cannot be written as
a linear combination of $g_0,\ldots,g_{m-1}$. Thus, $g_0,\ldots,g_s \in \mathcal{R}( E_{A_s}\cap F_S),$
are linearly independent functions, which contradicts \ref{exp715}.
\end{proof}

\begin{proof}[Proof of Theorem \ref{th702}]
For $g\in L^2(\mathbb R^4,\mathbb Q),$
the result follows from Proposition \ref{prop701}. Further, if $g\in L^1(\mathbb R^4,\mathbb Q),$
then (\ref{exp712}) and the finite rank assumption on $W(g)$ give $g\in L^2(\mathbb R^4,\mathbb Q).$
This completes the proof.
\end{proof}

\smallskip

\noindent\textit{Beurling's theorem.} The version of Beurling's theorem for the Fourier-Weyl transform on
step two nilpotent Lie groups proved in \cite{PT}, can be generalized for the quaternion Fourier-Weyl transform.

For $\xi=\left(\xi^{\prime}, \xi^{\prime \prime}\right),$ where $\xi^{\prime}, \xi^{\prime \prime} \in \mathbb{R}^{2},$
define the quaternion Fourier-Weyl transform of $f\in L^1(\mathbb R^4,\mathbb Q)$ by
\begin{align*}
\mathcal{\tilde{F}}(W(f))(\xi)=\int_{\mathbb R^2}\int_{\mathbb R^2}e^{2\pi i(x_1\xi_1^{\prime \prime}-y_1\xi_1^{\prime})}
f(x,y)\rho(x,y)e^{2\pi j(x_1\xi_2^{\prime \prime}-y_1\xi_2^{\prime})}dx dy.
\end{align*}
Then we have the following version of Beurling's theorem.
\begin{theorem}
Let $f\in L^1(\mathbb R^4,\mathbb Q)$ be such that
\begin{equation}\label{exp713}
\int_{\mathbb{R}^{4}} \int_{\mathbb{R}^{4}}\left|f(x, y)\right|
\left|\langle\mathcal{\tilde{F}}(W(f))(\xi) \varphi, \varphi\rangle\right|
e^{2\pi|x\cdot\xi^{\prime\prime}-y\cdot\xi^{\prime}|}
dx dy d\xi^{\prime} d\xi^{\prime\prime}<\infty,
\end{equation}
where $\varphi(x)=e^{-\pi |x|^2}$ is the Gaussian. Then $f=0.$
\end{theorem}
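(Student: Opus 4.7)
The plan is to reduce the hypothesis to a classical Beurling-type condition on a Euclidean (symplectic) Fourier transform and then invoke the version of Beurling's theorem for the Fourier--Weyl transform on step-two nilpotent groups proved in \cite{PT}. Write $f = h_1 + j h_2$ with complex valued components $h_1,h_2\in L^1(\mathbb{R}^4)$. Using the commutator relations (\ref{exp520}) together with (\ref{exp702}), the operator $\rho(x,y)$ restricted to $L^2_{-}(\mathbb{R}^2,\mathbb{Q})$ coincides with the Schr\"odinger representation $\pi(x,y)$. Hence on this subspace $W(f)$ and $\tilde{\mathcal{F}}(W(f))(\xi)$ decouple into the ordinary Weyl and Fourier--Weyl transforms of $h_1$ and $h_2$, exactly as in the proof of Theorem \ref{th701}.

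Next, I would compute the matrix coefficient $\langle\rho(x,y)\varphi,\varphi\rangle$ explicitly for the real Gaussian $\varphi(t)=e^{-\pi|t|^2}$. Splitting $\varphi = \varphi\frac{1+k}{2} + \varphi\frac{1-k}{2}$ and pushing the factors $\frac{1\pm k}{2}$ through $\rho(x,y)$ by means of (\ref{exp520}), the matrix coefficient collapses to an ordinary Euclidean Gaussian integral in each half-plane, yielding an expression of the form
\[
\langle\rho(x,y)\varphi,\varphi\rangle = C\, e^{-\pi(|x|^2+|y|^2)/2}\, \mu(x,y),
\]
where $\mu(x,y)$ is a unimodular quaternion phase depending on $x_1 y_1$ and $x_2 y_2$.

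The third step is to substitute this identity into (\ref{exp713}) and interchange the order of integration (justified by the absolute integrability). The $\xi$-integration then produces the symplectic Fourier transform of $F(x,y):= f(x,y)e^{-\pi(|x|^2+|y|^2)/2}$, and the hypothesis of the theorem becomes
\[
\int_{\mathbb{R}^4}\int_{\mathbb{R}^4}|f(x,y)|\,|\mathcal{F}_s F(\xi)|\, e^{2\pi|x\cdot\xi''-y\cdot\xi'|}\, dx\, dy\, d\xi < \infty.
\]
After splitting $F = H_1 + j H_2$ into complex valued components, this is precisely the Beurling hypothesis of \cite{PT} applied to each $H_l$. The conclusion of that theorem gives $H_1 = H_2 = 0$, and because the Gaussian factor $e^{-\pi(|x|^2+|y|^2)/2}$ is strictly positive, this forces $f=0$ almost everywhere.

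The main obstacle will be the non-commutative bookkeeping in the two places where imaginary units interact: the two-sided quaternion exponentials appearing in the very definition of $\tilde{\mathcal{F}}(W(f))$, and the $\pm$ splittings of both $f$ and the test Gaussian $\varphi$. One must verify that commuting $i$ past $j$ via (\ref{exp520}) does not introduce sign mismatches that would spoil either the symplectic weight $e^{2\pi|x\cdot\xi''-y\cdot\xi'|}$ or the Gaussian factor, and that the scalar constants from the Gaussian integrals are independent of $\xi$ so that the reduction to \cite{PT} produces an integrability statement of the correct form. Once this bookkeeping is executed, the reduction to the complex valued Beurling theorem is essentially automatic.
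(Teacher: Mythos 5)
Your overall architecture coincides with the paper's: form $g(x,y)=f(x,y)\langle\rho(x,y)\varphi,\varphi\rangle$, observe that $\langle\tilde{\mathcal{F}}(W(f))(\xi)\varphi,\varphi\rangle$ is a Fourier transform of $g$ at the symplectically rotated point $(-\xi'',\xi')$, deduce from (\ref{exp713}) (using $|\langle\rho(x,y)\varphi,\varphi\rangle|\leq 1$, so $|g|\leq|f|$) that $g$ satisfies a Beurling condition, conclude $g=0$, and divide out the nonvanishing Gaussian factor $\langle\rho(x,y)\varphi,\varphi\rangle=e^{-\frac{\pi}{2}(|x|^2+|y|^2)}$ (which, incidentally, carries no residual phase $\mu(x,y)$: the half-shift phases cancel against those produced by completing the square). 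The one substantive divergence is the final step. The paper simply invokes the Beurling theorem for the quaternion Fourier transform from \cite{EF} applied to the quaternion-valued $g$, whereas you reduce to complex components and cite \cite{PT}. That reduction as you state it has a flaw: the splitting $F=H_1+jH_2$ is not compatible with the two-sided QFT, because the kernel $e^{-2\pi j x_2\xi_2}$ acts on the $1,j$ part and the $i,k$ part with opposite signs (see (\ref{exp516})), so $\mathcal{F}(F)$ is not $\widehat{H_1}+j\widehat{H_2}$ and you cannot conclude $|\widehat{H_l}|\leq|\mathcal{F}(F)|$. The correct decomposition for this purpose is the $\pm$ splitting $q_\pm=\frac{1}{2}(q\pm iqj)$ of (\ref{exp521}), for which the QFT becomes the Euclidean Fourier transform (\ref{exp513}) and the modulus identities $|F|^2=|F_+|^2+|F_-|^2$, $|\mathcal{F}(F)|^2=|\mathcal{F}(F_+)|^2+|\mathcal{F}(F_-)|^2$ give the needed componentwise Beurling hypotheses. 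With that substitution your route is correct and essentially reproves the quaternion Beurling theorem of \cite{EF} that the paper cites as a black box; otherwise it buys nothing over the paper's shorter argument.
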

\begin{proof}
Let $g(x,y)=f(x,y)\langle\rho(x,y) \varphi, \varphi\rangle,$
then $\langle\mathcal{\tilde{F}}(W(f))(\xi) \varphi, \varphi\rangle$
is the Fourier transform of $g(x,y)$ at $(-\xi^{\prime\prime}, \xi^\prime).$
Now from equation (\ref{exp713}) we get
\[\int_{\mathbb{R}^{4}} \int_{\mathbb{R}^{4}}\left|g(x, y)\right|
|\mathcal{F}(g)(-\xi^{\prime\prime}, \xi^\prime)|
e^{2\pi|x\cdot\xi^{\prime\prime}-y\cdot\xi^{\prime}|}
dx dy d\xi^{\prime} d\xi^{\prime\prime}<\infty.\]
By applying the Beurling's theorem \cite{EF} for the quaternion Fourier transform,
we get $g(x,y)=0.$ Since $\langle\rho(x,y) \varphi, \varphi\rangle=e^{-\frac{\pi}{2}(|x|^2+|y|^2)}$ is
not vanishing everywhere implies $f=0.$
\end{proof}

\subsection{A remark about the set of injectivity for the quaternion twisted spherical means}
Let $\mu_r^{(k)}$ be the normalized surface measure on the sphere $S_r^{k-1}$ center at origin and radius
$r$ in $\mathbb{R}^k.$ Consider $\mathbf{\Gamma}\subseteq \mathbb{R}^k$ and
$\mathrm{G}\subseteq L_{\text{loc}}^1(\mathbb{R}^k).$ Then $\mathbf{\Gamma}$
is a set of injectivity for the spherical means in $\mathrm{G}$ if for every $g\in\mathrm{G},$
$g\ast \mu_r^{(k)}(x)=\int_{|y|=r}g(x-y)d\mu_r^{(k)}(y)=0$ for all $r>0$ and $x\in\mathbf{\Gamma}$ implies $g=0.$
The concept of injectivity is extended for the Heisenberg group in terms of the twisted spherical means (TSMs).
A subset $\Gamma\subseteq \mathbb{C}^n$ is a set of injectivity for the TSMs in
$\mathcal{G}\subseteq L_{\text{loc}}^1(\mathbb{C}^n)$ if for every
$g\in\mathcal{G},$ $g\times \mu_r^{(2n)}(z)=\int_{|w|=r}g(z-w)e^{\pi i \, \text{Im}(z.\bar{w})}d\mu_r^{(2n)}(w)=0$
for all $r>0$ and $z\in\Gamma$ implies $g=0.$ A considerable amount of work has been done in
this direction. See, e.g., \cite{NT1,Sri2,Sri4,V1}.

\smallskip

Now, consider $\Lambda\subseteq \mathbb{R}^4$ and $\mathcal{G}_\mathbb{Q}=\mathcal{G}.\left(\frac{1+k}{2}\right)
+\mathcal{G}.\left(\frac{1-k}{2}\right)\subseteq L_{\text{loc}}^1(\mathbb{R}^4,\mathbb Q).$
We say $\Lambda$ is a set of injectivity for the quaternion TSMs in $\mathcal{G}_\mathbb{Q}$ if for
every $f\in\mathcal{G}_\mathbb{Q},$
$$f\times_\mathbb Q \mu_r^{(4)}(p,q)=\int_{|(u,v)|=r}e^{\pi i(u_1q_1-p_1v_1)}
f(p-u,q-v)e^{\pi j(u_2q_2-p_2v_2)}d\mu_r^{(4)}(u,v)=0$$
for all $r>0$ and $(p,q)\in\Lambda$ implies $f=0.$ {\em In view of $(\ref{exp520}),$ it can
be concluded that $\Lambda$ is a set of injectivity for the quaternion TSMs in $\mathcal{G}_\mathbb{Q}$
if and only if $\Lambda$ and $\tilde{\Lambda}$ are set of injectivity for the TSMs in $\mathcal{G},$
where $\tilde{\Lambda}=\{(p_1,p_2,q_1,q_2):(p_1,-p_2,q_1,-q_2)\in \Lambda\}.$}

\smallskip

\noindent\textit{Helgason's support theorem.}
In a remarkable result, Helgason proved the following support theorem (see \cite{H}).
If $g$ is a continuous function on $\mathbb R^k,(k\geq2)$ such that $|x|^lg(x)$ is
bounded for each $l\in\mathbb{Z}_+,$ then $g$ is supported in the ball $B_r(0)$ if
and only if $g\ast\mu_s^{(k)}(x)=0, \forall~x\in\mathbb R^k$ and $\forall s>|x|+r.$
Together with different analogues in different setups, this is extended for the Heisenberg
group in terms of the TSM. See, e.g., \cite{DGS,EK,NT2,RS}. In view of the above discussion, we
have the following support theorem in terms of the quaternion TSM.

{\em Let $f$ be a quaternion valued continuous function on $\mathbb R^4$ such that $|(p,q)|^lf(p,q)$
is bounded for each $l\in\mathbb{Z}_+.$ Then $f$ is supported in the ball $B_r(0)$ if and only if
$f\times_\mathbb Q \mu_s^{(4)}(p,q)=0, \forall~(p,q)\in\mathbb R^4$ and $\forall s>|(p,q)|+r.$}

\bigskip

\noindent{\bf Acknowledgements:} The first named author would like to gratefully acknowledge the support
provided by IIT Guwahati, Government of India. The second named author is supported by Post Doctoral
fellowship from TIFR Center for Applicable Mathematics.

\bigskip


\begin{thebibliography}{1000}

\bibitem{AB} W. O. Amrein and A. M. Berthier, {\em On support properties of {$L^{p}$}-functions and their
{F}ourier transforms,} J. Funct. Anal. 24 (1977), no. 3, 258-267.

\bibitem{AL} D. Arnal and J. Ludwig, {\em Q.U.P. and Paley-Wiener properties of unimodular, especially nilpotent,
 Lie groups,} Proc. Amer. Math. Soc. 125 (1997), no. 4, 1071-1080.

\bibitem{BHHR} M. Bahri, E. S. M. Hitzer, A. Hayashi and R. Ashino, {\em An uncertainty principle for quaternion
Fourier transform,} Comput. Math. Appl. 56 (2008), no. 9, 2398-2410.

\bibitem{B} M. Benedicks, {\em On Fourier transforms of functions supported on sets of finite Lebesgue measure,}
J. Math. Anal. Appl. 106 (1985), no. 1, 180-183.

\bibitem{CGS} A. Chattopadhyay, D. K. Giri and R. K. Srivastava, {\em Qualitative uncertainty principle on certain
Lie groups,} \href{https://arxiv.org/abs/1607.03832}{arXiv:1607.03832}.

\bibitem{CZ} L. Chen and J. Zhao, {\em Weyl transform and generalized spectrogram associated
with quaternion {H}eisenberg group,} Bull. Sci. Math. 136 (2012), no. 2, 127-143.

\bibitem{CKL} L. P. Chen, K. I. Kou and M. S. Liu, {\em Pitt's inequality and the uncertainty principle associated with
the quaternion Fourier transform,} J. Math. Anal. Appl. 423 (2015), no. 1, 681-700.

\bibitem{DGS} R. K. Dalai, S. Ghosh and R. K. Srivastava, {\em Spherical means on M\'{e}tivier
groups and support theorem,} \href{https://arxiv.org/abs/2108.11744}{arXiv:2108.11744}.

\bibitem{EK} C. L. Epstein and B. Kleiner, {\em Spherical means in annular regions,}
Comm. Pure Appl. Math. 46 (1993), no. 3, 441-451.

\bibitem{GS} S. Ghosh and R. K. Srivastava, {\em Heisenberg uniqueness pairs for the Fourier transform on
the Heisenberg group,} Bull. Sci. Math. 166 (2021), 102941, 23 pp.

\bibitem{GS2} S. Ghosh and R. K. Srivastava, {\em Benedicks-Amrein-Berthier theorem for the Heisenberg motion group
and quaternion Heisenberg group,} \href{https://arxiv.org/abs/1904.04023}{arXiv:1904.04023}.

\bibitem{GS3} S. Ghosh and R. K. Srivastava, {\em Unbounded Weyl transform on the Euclidean motion group and Heisenberg
motion group,} \href{https://arxiv.org/abs/2106.15704}{arXiv:2106.15704}.

\bibitem{EF} Y. El Haoui and S. Fahlaoui, {\em Beurling's theorem for the quaternion Fourier transform,}
J. Pseudo-Differ. Oper. Appl. 11 (2020), no. 1, 187-199.

\bibitem{GJM} K. Gr\"{o}chenig, P. Jaming and E. Malinnikova, {\em Zeros of the Wigner distribution
and the short-time Fourier transform,} Rev. Mat. Complut. 33 (2020), no. 3, 723-744.

\bibitem{H} S. Helgason, {\em The Radon Transform}, Progress in Mathematics, 5, Birkh\"{a}user, Boston, Mass., 1980.
\bibitem{HL} J. A. Hogan and J. D. Lakey, {\em Hardy's theorem and rotations,}
Proc. Amer. Math. Soc. 134 (2006), no. 5, 1459-1466.

\bibitem{L} P. Lian, {\em Sharp Hausdorff-Young inequalities for the quaternion Fourier transforms,}
Proc. Amer. Math. Soc. 148 (2020), no. 2, 697-703.

\bibitem{NR} E. K. Narayanan and P. K. Ratnakumar, {\em Benedicks' theorem for the Heisenberg group,}
Proc. Amer. Math. Soc. 138 (2010), no. 6, 2135-2140.

\bibitem{NT1} E. K. Narayanan and S. Thangavelu, {\em Injectivity sets for spherical means on the Heisenberg group,}
J. Math. Anal. Appl. 263 (2001), no. 2, 565-579.

\bibitem{NT2} E. K. Narayanan and S. Thangavelu, {\em A spectral Paley-Wiener theorem for the
Heisenberg group and a support theorem for the twisted spherical means on $~\mathbb C^n$},
Ann. Inst. Fourier (Grenoble) 56 (2006), no. 2, 459-473.

\bibitem{PT} S. Parui and S. Thangavelu, {\em On theorems of Beurling and Hardy for certain step two nilpotent groups,}
Integral Transforms Spec. Funct. 20 (2009), no. 1-2, 127-145.

\bibitem{PZ} L. Peng and J. Zhao, {\em Weyl transforms associated with the Heisenberg group,}
Bull. Sci. Math. 132 (2008), no. 1, 78-86.

\bibitem{PZ2} L. Peng and J. Zhao, {\em Weyl transforms on the upper half plane,}
Rev. Mat. Complut. 23 (2010), no. 1, 77-95.

\bibitem{RS} R. Rawat and R. K. Srivastava, {\em Twisted spherical  means in annular regions in
$\mathbb C ^n$ and support theorems,} Ann. Inst. Fourier (Grenoble) 59 (2009), no. 6, 2509-2523.

\bibitem{Si} B. Simon, {\em The Weyl transforms and $L^p$ functions on phase space,}
Proc. Amer. Math. Soc. 116 (1992), 1045-1047.

\bibitem{Sri2} R. K. Srivastava, {\em Coxeter system of lines and planes are sets of injectivity for
the twisted spherical means,} J. Funct. Anal. 267 (2014), no. 2, 352-383.

\bibitem{Sri4} R. K. Srivastava, {\em Non-harmonic cones are sets of injectivity for the twisted spherical means on
 $\mathbb{C}^n$,} Trans. Amer. Math. Soc., 368 (2016), no. 3, 1941-1957.

\bibitem{TP} S. Tang and P. Niu, {\em Hardy's theorem and rotations of several complex variables,}
Anal. Math. 35 (2009), no. 4, 273-287.

\bibitem{T1} S. Thangavelu, {\em  Harmonic analysis on the Heisenberg group,} Progress in Mathematics, 159, Birkh\"{a}user Boston, Inc., Boston, MA, 1998.

\bibitem{V} M. K. Vemuri, {\em Benedicks theorem for the Weyl transform,} J. Math. Anal. Appl. 452 (2017), no. 1, 209-217.

\bibitem{V1} V. V. Volchkov, {\em Integral geometry and convolution equations,} Kluwer Academic Publishers, Dordrecht, 2003.

\bibitem{We} H. Weyl, {\em The Theory of Groups and Quantum Mechanics,} Dover Publications, Inc., New York, 1950.

\bibitem{W} M. W. Wong, {\em Weyl Transform,} Universitext, Springer-Verlag, New York, 1998.

\end{thebibliography}
\end{document}